\newcommand{\Z}{{\mathbb Z}}
\newcommand{\C}{{\mathbb C}}
\newcommand{\N}{{\mathbb N}}
\newcommand{\E}{{\mathcal E}}
\def\<{\langle}
\def\>{\rangle}
\def\End{\mathrm{End}}
\def\Res{\mathrm{Res}}
\newtheorem{thm}{Theorem}[section]
\newtheorem{prop}[thm]{Proposition}
\newtheorem{lem}[thm]{Lemma}
\newtheorem{cor}[thm]{Corollary}
\newtheorem{rmk}[thm]{Remark}
\newtheorem{definition}[thm]{Definition}
\begin{document}

\makeatletter \@addtoreset{equation}{section}
\def\theequation{\thesection.\arabic{equation}}
\makeatother \makeatletter

\begin{center}
{\Large \bf   Certain Clifford-like algebra and quantum vertex algebras}
\end{center}

\begin{center}
{Haisheng Li$^{a}$\footnote{Partially supported by
 China NSF grant (No.11471268)},
Shaobin Tan$^{b}$\footnote{Partially supported by China NSF grant (No.11471268)}
and Qing Wang$^{b}$\footnote{Partially supported by
 China NSF grant (No.11371024), Natural Science Foundation of Fujian Province
(No. 2013J01018) and Fundamental Research Funds for the Central
University (No.2013121001).}\\
$\mbox{}^{a}$Department of Mathematical Sciences\\
Rutgers University, Camden, NJ 08102, USA\\
$\mbox{}^{b}$School of Mathematical Sciences, Xiamen University,
Xiamen 361005, China}
\end{center}

\begin{abstract}
In this paper, we study in the context of quantum vertex algebras a certain Clifford-like algebra introduced by Jing and Nie.
We establish bases of PBW type and classify its $\N$-graded irreducible modules by using a notion of Verma module.
On the other hand, we introduce a new algebra, a twin of the original algebra.
Using this new algebra we construct a quantum vertex algebra and  we associate $\N$-graded modules for Jing-Nie's
Clifford-like algebra with $\phi$-coordinated modules for the quantum vertex algebra.
We also show that the adjoint module for the quantum vertex algebra is irreducible.
\end{abstract}

\section{Introduction}

In their study of vertex operator realization of symplectic and orthogonal Schur functions, as the main ingredient
Jing and Nie introduced in \cite{JN} certain vertex operators $Y(z)$ and $Y^{*}(z)$ on the algebra of polynomials in infinitely
 many variables. They proved that  $Y(z)$ and $Y^{*}(z)$ satisfy the following relations
 \begin{eqnarray*}
&&Y(z)Y(w)+\frac{w}{z}Y(w)Y(z)=0, \label{eq:3.4}\\
&&Y^{\ast}(z)Y^{\ast}(w)+\frac{w}{z}Y^{\ast}(w)Y^{\ast}(z)=0, \label{eq:3.5}\\
&&Y(z)Y^{\ast}(w)+\frac{z}{w}Y^{\ast}(w)Y(z)=\delta\left(\frac{z}{w}\right). \label{eq:3.6}
\end{eqnarray*}
 In terms of components, these relations read as
 $$Y_mY_n+Y_{n+1}Y_{m-1}=0, \   \   \
Y_m^{\ast}Y_n^{\ast}+Y_{n+1}^{\ast}Y_{m-1}^{\ast}=0, \   \  \
Y_mY_n^{\ast}+Y_{n-1}^{\ast}Y_{m+1}=\delta_{m+n,0} $$
for $m,n\in \Z$, where
$$Y(z)=\sum_{n\in \Z}Y_{n}z^{-n},\   \   \   \  Y^{*}(z)=\sum_{n\in \Z}Y^{*}_{n}z^{-n}.$$
This naturally gives rise to an infinite-dimensional Clifford-like algebra, which is the main object of our study
in this current paper. This is one of what are called Zamolodchikov-Faddeev algebras in mathematical physics.

In this paper, we study this Clifford-like algebra from a vertex-algebra point of view
and our main objective is to determine  its irreducible modules of highest weight type and to associate them
with vertex algebras or more general quantum vertex algebras.
We first define an abstract associative algebra denoted by $\mathcal{A}$
 to be the algebra (with identity over $\C$) with generators $Y_{n},\ Y^{*}_{n}$ for $n\in \Z$, subject to the above relations.
As for the algebraic structure, we establish a basis of  PBW type and we classify its irreducible $\N$-graded modules,
noticing that $\mathcal{A}$ is naturally $\Z$-graded. On the other hand,
 we introduce another closely related associative algebra $\tilde{\mathcal{A}}$, and
 construct a quantum vertex algebra $V_{\tilde{\mathcal{A}}}$ in the sense of \cite{Li-nonlocal}, which is
based on a distinguished $\tilde{\mathcal{A}}$-module. Finally,  we
establish a canonical association of  $\N$-graded $\mathcal{A}$-modules with $\phi$-coordinated
$V_{\tilde{\mathcal{A}}}$-modules in the sense of \cite{Li-phi}.

Now, we give a more detailed account of the contents of this paper.
First, we determine the algebraic structure of algebra $\mathcal{A}$.
To this end, we establish a base of PBW type by using the Diamond Lemma.
We also use an infinite-dimensional Clifford algebra to obtain another base of PBW type. As $\mathcal{A}$
is a $\Z$-graded algebra with $\deg Y_{n}=-n=\deg Y^{*}_{n}$ for $n\in \Z$, we study its $\N$-graded modules.
Note that the sum of $\mathcal{A}_{n}\mathcal{A}_{-n}$ for $n\ge 1$ is a (two-sided) ideal of
 the degree zero subalgebra $\mathcal{A}_{0}$,
which we denote by $I$. Set $\bar{\mathcal{A}}_{0}=\mathcal{A}_{0}/I$.
We show that irreducible $\N$-graded $\mathcal{A}$-modules
one-to-one correspond to irreducible $\bar{\mathcal{A}}_{0}$-modules.
Furthermore,  we prove that $\bar{\mathcal{A}}_{0}$ is isomorphic to the group algebra of $\Z$ and
all irreducible $\bar{\mathcal{A}}_{0}$-modules are $1$-dimensional parametrized by nonzero complex numbers.
In this way, we have classified irreducible $\N$-graded modules for $\mathcal{A}$. Note that Jing and Nie gave an explicit realization
of $\mathcal{A}$. In this paper, we show that this $\mathcal{A}$-module is an irreducible $\N$-graded module and there exists a non-degenerate symmetric bilinear form which is invariant with respect to a specific involution of $\mathcal{A}$.

To associate (quantum) vertex algebras to $\mathcal{A}$, we start with its generating functions
$Y(z)$ and $Y^{*}(z)$. From the defining relations,
we see that  $Y(z)$ and $Y^{*}(z)$ on $\N$-graded $\mathcal{A}$-modules do not satisfy
the locality condition which is essential to the construction of vertex algebras and modules (see \cite{Li-local}).
Nevertheless, they are $S_{trig}$-local in the sense of \cite{Li-phi}, where a theory of what were called $\phi$-coordinated modules
for (weak) quantum vertex algebras in the sense of \cite{Li-nonlocal} was developed.
It was proved in  \cite{Li-phi} that
any set of $S_{trig}$-local formal vertex operators on a vector space $W$  generate  a weak quantum vertex algebra
with $W$ as a faithful $\phi$-coordinated module. It is this result that we shall use to associate quantum vertex algebras to
algebra $\mathcal{A}$.

To obtain a concrete association of quantum vertex algebras to $\mathcal{A}$,
as the main ingredient we introduce another algebra which we denote by $\tilde{\mathcal{A}}$.
By definition, algebra $\tilde{\mathcal{A}}$  is generated by
$\tilde{Y}_{n},\  \tilde{Y}^{*}_{n}$ with $n\in \Z$, subject to relations
 \begin{eqnarray*}
&&\tilde{Y}(z)\tilde{Y}(w)+e^{w-z}\tilde{Y}(w)\tilde{Y}(z)=0, \label{eq:3.4}\\
&&\tilde{Y}^{\ast}(z)\tilde{Y}^{\ast}(w)+e^{w-z}\tilde{Y}^{\ast}(w)\tilde{Y}^{\ast}(z)=0, \label{eq:3.5}\\
&&\tilde{Y}(z)\tilde{Y}^{\ast}(w)+e^{z-w}\tilde{Y}^{\ast}(w)\tilde{Y}(z)=w^{-1}\delta\left(\frac{z}{w}\right), \label{eq:3.6}
\end{eqnarray*}
where $\tilde{Y}(z)=\sum_{n\in \Z}\tilde{Y}_{n}z^{-n-1}$ and $\tilde{Y}^{*}(z)=\sum_{n\in \Z}\tilde{Y}^{*}_{n}z^{-n-1}$.
(Note that this is also a Zamolodchikov-Faddeev algebra.)
By using an infinite-dimensional Clifford algebra, we give a realization of $\tilde{\mathcal{A}}$.
It is important to note that the generating functions $\tilde{Y}(x)$ and $\tilde{Y}^{*}(x)$ of $\tilde{\mathcal{A}}$ form an
$\mathcal{S}$-local set in the sense of \cite{Li-nonlocal}.
Let $V_{\tilde{\mathcal{A}}}$ be the $\tilde{\mathcal{A}}$-module generated by a distinguished vector ${\bf 1}$, subject to relations
$$\tilde{Y}_{n}{\bf 1}=0=\tilde{Y}^{*}_{n}{\bf 1}\   \   \   \mbox{ for  }n\ge 0.$$
Set
$\tilde{a}=\tilde{Y}_{-1}{\bf 1},    \  \tilde{b}=\tilde{Y}^{*}_{-1}{\bf 1}\in V_{\tilde{\mathcal{A}}}.$
By using a result of \cite{Li-nonlocal}, we show that there is a weak quantum vertex algebra structure
on $V_{\tilde{\mathcal{A}}}$, which is uniquely determined by the condition that  ${\bf 1}$ is the vacuum vector
and
$$Y(\tilde{a},x)=\tilde{Y}(x),\    \    \    \  Y(\tilde{b},x)=\tilde{Y}^{*}(x) .$$
On the other hand, by using a certain increasing filtration and a result of \cite{KL},
we show that $V_{\tilde{\mathcal{A}}}$ is an irreducible $\tilde{\mathcal{A}}$-module.
It then follows that $V_{\tilde{\mathcal{A}}}$ is a (non-degenerate) quantum vertex algebra. We achieve our main goal
by showing that a restricted $\mathcal{A}$-module structure on a vector space $W$ exactly amounts to a $\phi$-coordinated
$V_{\tilde{\mathcal{A}}}$-module structure $Y_{W}(\cdot,x)$ with
$$Y_{W}(\tilde{a},x)=Y(x),\    \    \    \   Y_{W}(\tilde{b},x)=Y^{*}(x).$$

This paper is organized as follows: In Section 2, we first define the algebra $\mathcal{A}$, and
then we give a base of PBW type and classify its irreducible $\N$-graded modules.
In Section 3, we introduce the algebra $\tilde{\mathcal{A}}$, construct a quantum vertex algebra $V_{\tilde{\mathcal{A}}}$, and show that  $V_{\tilde{\mathcal{A}}}$ as an $\tilde{\mathcal{A}}$-module is irreducible. Furthermore, we give a canonical association of $\phi$-coordinated modules for $V_{\tilde{\mathcal{A}}}$ to restricted $\mathcal{A}$-modules.

\section{Preliminaries}

In this section,  we recall from \cite{Li-nonlocal} and \cite{Li-phi} some basic notations and results on quantum vertex algebras
and their $\phi$-coordinated modules, including the conceptual construction of (weak) quantum vertex algebras and
 $\phi$-coordinated modules.

Throughout this paper, $\N$ denotes the set of nonnegative integers,
 $\mathbb{C}^{\times}$ denotes the multiplicative group of nonzero complex numbers
 (while $\C$ denotes the complex number field), and
the symbols $x,y,x_{1},x_{2},\dots $ denote mutually commuting independent formal variables.
All vector spaces in this paper are
considered to be over  $\mathbb{C}$. For a vector space $U$, $U((x))$ is the vector space of lower
truncated integer power series in $x$ with coefficients
 in $U$, $U[[x]]$ is the vector space of nonnegative integer
 power series in $x$ with coefficients in $U$, and
$U[[x,x^{-1}]]$ is the vector space of doubly infinite integer power series in $x$ with coefficients in $U$.

We begin by recalling the definitions of nonlocal vertex algebras and modules (see \cite{Li-nonlocal}, \cite{Li-g1}; cf. \cite{B}, \cite{BK}).

\begin{definition}\label{defnlva}
{\em A {\em nonlocal vertex algebra} is a vector space $V$ equipped with a linear map
      \begin{eqnarray*}
             Y(\cdot,x) :&&V\longrightarrow \mathrm{Hom}(V,V((x)))\subset (\mathrm{EndV})[[x,x^{-1}]]\\
              &&v\longmapsto Y(v,x)=\sum_{n\in\mathbb{Z}}v_{n}x^{-n-1}\ \ (\mbox{where }v_{n}\in \End V)
      \end{eqnarray*}
              and equipped with a distinguished vector $\textbf{1}\in V$, called the {\em vacuum vector},
              satisfying the conditions that
              $$Y(\textbf{1},x)v=v,$$
              $$Y(v,x)\textbf{1}\in V[[x]] \;\;\mbox{and}\;\; \lim_{x\rightarrow  0}Y(v,x)\textbf{1} = v,$$
              and  that for $u,v,w\in V$, there exists a nonnegative integer $l$ such that
       \begin{eqnarray*}
             (x_0+x_2)^{l}Y(u,x_{0}+x_2)Y(v,x_{2})w=(x_0+x_2)^{l}Y(Y(u,x_0)v,x_2)w.
         \end{eqnarray*}}
\end{definition}

\begin{definition}\label{defmodule}
{\em Let $V$ be a nonlocal vertex algebra. A {\em $V$-module} is a vector space
                  $W$ equipped with a linear map
   \begin{eqnarray*}
     Y_{W}(\cdot,x) :&&V\longrightarrow \mathrm{Hom}(W,W((x)))\subset (\mathrm{EndW})[[x,x^{-1}]]\\
              && v\longmapsto Y_{W}(v,x)=\sum_{n\in\mathbb{Z}}v_{n}x^{-n-1}\ \ (\mbox{where }v_{n}\in \End W),
   \end{eqnarray*}
              satisfying the conditions that
                   $$ Y_{W}(1,x) = 1_{W}\ \ (\mbox{the identity operator on }W)$$
                    and  that for any $u,v\in V$, $w\in W$, there exists a nonnegative integer $l$
                    such that
                    \begin{eqnarray*}
              &&(x_0+x_2)^{l}Y_{W}(u,x_{0}+x_2)Y_{W}(v,x_{2})w=(x_0+x_2)^{l}Y_{W}(Y(u,x_0)v,x_2)w.
                         \end{eqnarray*}}
\end{definition}

The last condition in Definitions \ref{defnlva} and \ref{defmodule} is usually referred to as {\em weak associativity}.

We recall from \cite{Li-nonlocal} (cf. \cite{Li-const}, \cite{Li-phi}) the notion of weak quantum vertex algebra.

\begin{definition}
{\em A {\em weak quantum vertex algebra} is a vector space $V$ equipped with a linear map
             $Y(\cdot,x) :V\rightarrow \mathrm{Hom}(V,V((x)))$ and a vector $\textbf{1}\in V$,
              satisfying the conditions that for $v\in V$,
              $$Y(\textbf{1},x)v=v,$$
              $$Y(v,x)\textbf{1}\in V[[x]] \;\;\mbox{and}\;\; \lim_{x\rightarrow  0}Y(v,x)\textbf{1} = v,$$
 and that for $u,v\in V$, there exists $\sum_{i=1}^{r}v^{(i)}\otimes u^{(i)}\otimes f_{i}(x)\in V\otimes V\otimes {\mathbb{C}}((x))$
 such that
              \begin{eqnarray}\label{eSjacobi}
              &&x_{0}^{-1}\delta\left(\frac{x_{1}-x_{2}}{x_{0}}\right)Y(u,x_{1})Y(v,x_{2}) -
                         x_{0}^{-1}\delta\left(\frac{x_{2}-x_{1}}{-x_{0}}\right)\sum_{i=1}^{r}f_{i}(-x_{0})Y(v^{(i)},x_{2})Y(u^{(i)},x_{1})\nonumber\\
               &&\hspace{2cm}= x_{2}^{-1}\delta\left(\frac{x_{1}-x_{0}}{x_{2}}\right)Y(Y(u,x_{0})v,x_{2}). \label{eq:2.1}
              \end{eqnarray}}
\end{definition}

Note that a weak quantum vertex algebra is automatically a nonlocal vertex algebra as the $\mathcal{S}$-Jacobi identity
(\ref{eSjacobi}) implies the weak associativity. It is clear that the notion of weak quantum vertex algebra generalizes that of vertex algebra and vertex super-algebra.

\begin{rmk}
{\em Let $V$ be a weak quantum vertex algebra.
A {\em $V$-module} is defined to be a module for $V$ viewed as a nonlocal vertex algebra.
Let $(W,Y_{W})$ be any $V$-module.
From \cite{Li-g1}, for $u,v\in V$, whenever (\ref{eq:2.1}) holds, we have
 \begin{eqnarray*}
              &&x_{0}^{-1}\delta\left(\frac{x_{1}-x_{2}}{x_{0}}\right)Y_{W}(u,x_{1})Y_{W}(v,x_{2}) -
                         x_{0}^{-1}\delta\left(\frac{x_{2}-x_{1}}{-x_{0}}\right)\sum_{i=1}^{r}f_{i}(-x_{0})Y_{W}(v^{(i)},x_{2})Y(u^{(i)},x_{1})\\
               &&\hspace{2cm}= x_{2}^{-1}\delta\left(\frac{x_{1}-x_{0}}{x_{2}}\right)Y_{W}(Y(u,x_{0})v,x_{2}).
              \end{eqnarray*} }
\end{rmk}

Recall that a {\em rational quantum Yang-Baxter operator} on a vector space $U$ is a linear map
 $$S(x):\   U\otimes U\rightarrow U\otimes U\otimes \mathbb{C}((x)),$$
 satisfying $$S^{12}(x)S^{13}(x+z)S^{23}(z)=S^{23}(z)S^{13}(x+z)S^{12}(x)$$
 (the {\em quantum Yang-Baxter equation}), where for $1\leq i<j\leq 3$,
 $$S^{ij}(x): U\otimes U\otimes U\rightarrow U\otimes U\otimes U\otimes \mathbb{C}((x))$$
 denotes the canonical extension of $S(x)$. It is said to be {\em unitary} if $S(x)S^{21}(-x)=1,$
 where $S^{21}(x)=\sigma S(x)\sigma$ with $\sigma$ denoting the flip operator on $U\otimes U$.

The following notion of quantum vertex algebra was introduced in \cite{Li-nonlocal} (cf. \cite{EK}):

\begin{definition}
{\em A {\em quantum vertex algebra} is a weak quantum vertex algebra $V$ equipped with a unitary rational quantum Yang-Baxter operator $S(x)$ on $V$ such that for $u,v\in V$, (\ref{eq:2.1}) holds with $\sum_{i=1}^{r}v^{(i)}\otimes u^{(i)}\otimes f_{i}(x)=S(x)(v\otimes u)$.}
\end{definition}

The following notion is due to Etingof and Kazhdan (see \cite{EK}):

\begin{definition}
{\em A nonlocal vertex algebra $V$ is said to be {\em non-degenerate} if for every positive integer $n$, the linear map
$$Z_n:\  \mathbb{C}((x_{1}))\cdots((x_{n}))\otimes V^{\otimes n}\rightarrow V((x_{1}))\cdots((x_{n}))$$ defined by
$$Z_n(f\otimes v^{(1)}\otimes\cdots\otimes v^{(n)})=fY(v^{(1)},x_{1})\cdots Y(v^{(n)},x_{n})\textbf{1}$$
is injective.}
\end{definition}

\begin{rmk}
{\em It was proved in \cite{Li-nonlocal} that every non-degenerate weak quantum vertex algebra is a quantum vertex algebra
with a uniquely determined rational quantum Yang-Baxter operator.  Furthermore, it was proved therein
that if $V$ is of countable dimension and if $V$ as a (left) $V$-module is irreducible, then $V$ is non-degenerate. }
\end{rmk}

Next, we recall from \cite{Li-phi} and \cite{Li6} some basic notions and results in the theory of $\phi$-coordinated modules for weak quantum vertex algebras. In this theory, $\phi$ stands for the formal series $\phi(x,z)=xe^{z}\in \mathbb{C}[[x,z]]$, which is what is called therein an associate of the $1$-dimensional additive formal group (law).

\begin{definition}
{\em Let $V$ be a weak quantum vertex algebra. A {\em $\phi$-coordinated $V$-module} is a vector space
                  $W$ equipped with a linear map
                  $$Y_{W}(\cdot,x) :\  V\longrightarrow \mathrm{Hom}(W,W((x)))\subset (\mathrm{EndW})[[x,x^{-1}]],$$
              satisfying the conditions that $Y_{W}(1,x) = 1_{W}$
                    and that for any $u,v\in V$, there exists $k\in \mathbb{N}$
                    such that
                    \begin{eqnarray*}
              &&(x_1-x_2)^{k}Y_{W}(u,x_{1})Y_{W}(v,x_{2})\in \mathrm{Hom}(W,W((x_{1},x_{2})))
                         \end{eqnarray*}
                         and
                    \begin{eqnarray*}
              &&(x_{2}e^{z}-x_2)^{k}Y_{W}(Y(u,z)v,x_{2})=((x_1-x_2)^{k}Y_{W}(u,x_{1})Y_{W}(v,x_{2})|_{x_1=x_{2}e^{z}}.
                         \end{eqnarray*}}
\end{definition}

The following result was obtained in \cite{Li-phi}:

\begin{prop}
Let $V$ be a weak quantum vertex algebra and let $(W,Y_{W})$ be a $\phi$-coordinated $V$-module. Let $u,v\in V$.
Suppose that $u^{(i)},\ v^{(i)}\in V,\ f_i(x)\in \C(x)\ (1\le i\le r)$ such that
\begin{eqnarray}
(x_1-x_2)^{k}Y(u,x_1)Y(v,x_2)=(x_1-x_2)^{k}\sum_{i=1}^{r}f_i(e^{x_2-x_1})Y(v^{(i)},x_2)Y(u^{(i)},x_1)
\end{eqnarray}
on $V$ for some nonnegative integer $k$. Then
\begin{eqnarray}
&&Y_{W}(u,x_1)Y_W(v,x_2)-\sum_{i=1}^{r}f_{i}(x_2/x_1)Y_{W}(v^{(i)},x_2)Y_{W}(u^{(i)},x_1)\nonumber\\
&=&\sum_{n\ge 0}\frac{1}{n!}Y_{W}(u_nv,x_2)\left(x_2\frac{\partial}{\partial x_2}\right)^{n}\delta\left(\frac{x_2}{x_1}\right).
\end{eqnarray}
\end{prop}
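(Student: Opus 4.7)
The plan is to combine the hypothesis on $V$ with the weak associativity axiom for $\phi$-coordinated modules, mimicking the classical derivation of commutator formulas in vertex algebra theory but adapted to the multiplicative substitution $x_1 = x_2 e^z$ characteristic of the $\phi$-coordinated setting.

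First, I would transfer the $V$-relation to a corresponding relation on $W$. The hypothesis reads $(x_1-x_2)^k Y(u,x_1)Y(v,x_2) = (x_1-x_2)^k \sum_i f_i(e^{x_2-x_1})Y(v^{(i)},x_2)Y(u^{(i)},x_1)$ on $V$. Applying both sides to the vacuum and using the iterate-formula extraction together with $\phi$-coordinated weak associativity, one deduces that on $W$ the analogous identity
\[
(x_1-x_2)^k Y_W(u,x_1)Y_W(v,x_2) = (x_1-x_2)^k \sum_{i=1}^r f_i(x_2/x_1)Y_W(v^{(i)},x_2)Y_W(u^{(i)},x_1)
\]
holds, the change $f_i(e^{x_2-x_1})\leadsto f_i(x_2/x_1)$ being forced by the substitution $x_1=x_2 e^z$ since $e^{x_2-x_1}|_{x_1=x_2e^z}=e^{-z}=(x_2/x_1)|_{x_1=x_2e^z}$. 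Consequently the difference
\[
L(x_1,x_2):=Y_W(u,x_1)Y_W(v,x_2)-\sum_{i=1}^r f_i(x_2/x_1)Y_W(v^{(i)},x_2)Y_W(u^{(i)},x_1)
\]
is annihilated by $(x_1-x_2)^k$.

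Next, by the standard decomposition lemma for formal series supported on the diagonal $x_1=x_2$ (in the multiplicative setting), any such $L$ admits a unique finite expansion $L(x_1,x_2)=\sum_{n=0}^{k-1}\frac{1}{n!}A_n(x_2)(x_2\partial_{x_2})^n \delta(x_2/x_1)$ with coefficients $A_n(x_2)\in\mathrm{Hom}(W,W((x_2)))$. To identify $A_n(x_2)=Y_W(u_n v, x_2)$, I would apply the $\phi$-coordinated weak associativity
\[
\bigl((x_1-x_2)^{k} Y_W(u,x_1)Y_W(v,x_2)\bigr)\big|_{x_1=x_2e^{z}} = (x_2 e^z - x_2)^k Y_W(Y(u,z)v, x_2)
\]
together with the analogous statement applied to the skew term in $L$; the $V$-hypothesis makes the two substituted expressions equal, so only the polar part $\sum_{n\ge 0}(u_n v)z^{-n-1}$ of $Y(u,z)v$ survives on the right. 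Comparing with the explicit evaluation of $(x_1-x_2)^k(x_2\partial_{x_2})^n \delta(x_2/x_1)|_{x_1=x_2 e^z}$ as a polynomial in $z$ (involving factors of $(e^z-1)$ that regularize the delta substitution), one matches coefficients of $z^n$ to conclude $A_n(x_2)=Y_W(u_n v, x_2)$.

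The main technical hurdle, I expect, lies in the first step: rigorously justifying that the $V$-relation with $f_i(e^{x_2-x_1})$ implies the $W$-relation with $f_i(x_2/x_1)$, and then verifying that on the right-hand side the non-polar part of $Y(u,z)v$ drops out under the substitution $x_1=x_2 e^z$ after multiplication by $(x_1-x_2)^k$. This compatibility between two different expansions of a rational function, mediated by the multiplicative substitution, is precisely what the $\phi$-coordinated module formalism of \cite{Li-phi} is designed to encode; once it is in place, the remainder of the argument is a direct analogue of the classical commutator-formula derivation, rewritten in the multiplicative framework.
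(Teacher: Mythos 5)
The paper does not prove this proposition; it simply quotes it from \cite{Li-phi} (``The following result was obtained in \cite{Li-phi}''), so there is no in-paper argument to compare against, and your proposal must stand on its own.

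Your overall strategy---(i) transfer the $V$-relation to a $W$-relation, (ii) conclude that $(x_1-x_2)^k L=0$ and apply the multiplicative-diagonal decomposition lemma, and (iii) identify the coefficients $A_n(x_2)$ with $Y_W(u_nv,x_2)$ via $\phi$-coordinated weak associativity---is indeed the approach that underlies the theory in \cite{Li-phi}, and the decomposition into $(x_2\partial_{x_2})^n\delta(x_2/x_1)$-terms is the correct multiplicative analogue. However, the one sentence you offer to justify the crucial step (i) contains a genuine computational error: under $x_1=x_2e^{z}$ one gets $e^{x_2-x_1}\mapsto e^{x_2(1-e^{z})}$, which is \emph{not} $e^{-z}$. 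The correct heuristic is that the additive weak associativity substitution on $V$ is $x_1\mapsto x_0+x_2$, sending $e^{x_2-x_1}\mapsto e^{-x_0}$, while the $\phi$-coordinated substitution on $W$ is $x_1\mapsto x_2e^{z}$, sending $x_2/x_1\mapsto e^{-z}$; the two match under the identification $x_0\leftrightarrow z$, not under a single substitution applied to both sides. In other words, the exchange $f_i(e^{x_2-x_1})\leadsto f_i(x_2/x_1)$ is precisely the content of the compatibility lemmas in \cite{Li-phi} (Propositions 5.6 and 5.9, which the present paper invokes in its proof of Theorem \ref{thm3.3}), and it cannot be obtained by the naive substitution you wrote.

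Beyond that, the identification step (iii) as you describe it would not go through literally: for $0\le n<k$ one has $(x_1-x_2)^{k}(x_2\partial_{x_2})^{n}\delta(x_2/x_1)=0$, so ``comparing coefficients of $z^{n}$'' after multiplying $L$ by $(x_1-x_2)^{k}$ and substituting $x_1=x_2e^z$ gives $0=0$ and extracts nothing. One instead needs to peel off coefficients with the lower powers $(x_1-x_2)^{j}$ for $j<k$ (using $(x_1-x_2)^{n}(x_2\partial_{x_2})^{n}\delta(x_2/x_1)=n!\,x_2^{n}\delta(x_2/x_1)$ and residues), and compare with the corresponding residues of $(x_2e^{z}-x_2)^{j}Y_W(Y(u,z)v,x_2)$, which is where the ``polar part'' of $Y(u,z)v$ actually enters. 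You flag both of these points as the ``main technical hurdle,'' which is fair self-assessment, but as written the justifications offered for them are incorrect rather than merely abbreviated.
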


 Let $W$ be a general vector space. Set
\begin{eqnarray*}
\mathcal{E}(W)=\mbox{Hom}(W , W((x)))\subset(\mbox{EndW})[[x,x^{-1}]].
\end{eqnarray*}
The identity operator on $W$, denoted by $\textbf{1}_{W}$, is a special
element of $\mathcal{E}(W).$

\begin{definition} {\em A subset (subspace) $U$ of $\mathcal{E}(W)$ is said to be $S_{trig}$-local
if for any $a(x), b(x)\in U$, there exist
$$u_{i}(x), v_{i}(x)\in U, \ q_{i}(x)\in \mathbb{C}(x), \ i=1,\ldots,r,$$
where $\mathbb{C}(x)$ denotes the field of rational functions, such that
\begin{eqnarray}
(x_1-x_2)^{k}a(x_{1})b(x_{2})=(x_1-x_2)^{k}\sum_{i=1}^{r}\iota_{x_2,x_1}(q_{i}(x_{1}/x_{2}))u_{i}(x_2)v_{i}(x_1)\label{eq:2.2}
\end{eqnarray}
for some $k\in \mathbb{N}$, where
$$\iota_{x_{1},x_{2}}: \  \C(x_{1},x_{2})\rightarrow \C((x_{1}))((x_{2}))$$
is the canonical extension of the ring embedding of $\C[x_{1},x_{2}]$ into the field $\C((x_{1}))((x_{2}))$.}
\end{definition}

Let $W$ be a vector space as before. Let $U$ be any $S_{trig}$-local subset of $\mathcal{E}(W)$ and let $a(x),b(x)\in U$.
Notice that the relation (\ref{eq:2.2}) implies
 \begin{eqnarray}
 (x_{1}-x_2)^{k}a(x_{1})b(x_{2})\in \mbox{Hom}(W, W((x_{1},x_{2}))).     \label{eq:2.3}
 \end{eqnarray}
 Define $a(x)_{n}^{e}b(x)\in\mbox{(End W)}[[x,x^{-1}]]$ for $n\in \Z$
                  in terms of generating function
                 \begin{eqnarray}Y_{\mathcal{E}}^{e}(a(x),z)b(x)=
                  \sum_{n\in\mathbb{Z}}(a(x)_{n}^{e}b(x))z^{-n-1}   \label{eq:2.4}\end{eqnarray}
                  by
                 \begin{eqnarray}Y_{\mathcal{E}}^{e}(a(x),z)b(x)=
                  (xe^{z}-x)^{-k}((x_1-x)^{k}a(x_{1})b(x))|_{x_{1}=xe^{z}},  \label{eq:2.5}\end{eqnarray}
                 where $k$ is any nonnegative integer such that (\ref{eq:2.3}) holds.

Let $U$ be an $S_{trig}$-local subspace of $\mathcal{E}(W)$. We say
 $U$ is {\em $Y_{\mathcal{E}}^{e}$-closed} if
           \begin{eqnarray*}
                 a(x)_{n}^{e}b(x)\in U
                  \    \    \mbox{ for all }a(x),b(x)\in U,\ n\in\mathbb{Z}.
            \end{eqnarray*}

The following result was obtained in \cite{Li-phi} (Theorem 5.4):

\begin{thm}\label{thm2.1}
Let $U$ be any $S_{trig}$-local subset of $\mathcal{E}(W)$. Then there exists a $Y_{\mathcal{E}}^{e}$-closed
$S_{trig}$-local subspace of $\mathcal{E}(W)$, which contains $U$ and $\textbf{1}_{W}$.
Denote by $\langle U\rangle_{e}$ the smallest such subspace. Then
$(\langle U\rangle_{e},Y_{\mathcal{E}}^{e},1_{W})$ carries the structure of a weak quantum vertex algebra and  $W$ is a
  $\phi$-coordinated $\langle U\rangle_{e}$-module with
 $Y_{W}(a(x),z) = a(z)$
  for $a(x)\in\langle U\rangle_{e}.$
\end{thm}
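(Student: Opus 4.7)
The plan is to adapt Li's conceptual construction of weak quantum vertex algebras from $\mathcal{S}$-local subspaces of $\mathcal{E}(W)$ to the present trigonometric/$\phi$-coordinated setting, where the rational scattering functions $q_{i}(x_{1}/x_{2})$ replace the $q_{i}(x_{1}-x_{2})$ of the additive theory. First I would build $\langle U\rangle_{e}$ inductively: set $U_{0}=\mathrm{span}(U\cup\{\mathbf{1}_{W}\})$ and $U_{k+1}=U_{k}+\mathrm{span}\{a(x)_{n}^{e}b(x): a(x),b(x)\in U_{k},\ n\in\Z\}$, with $\langle U\rangle_{e}=\bigcup_{k\ge 0}U_{k}$. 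For this to be well-defined I first verify, using (\ref{eq:2.3}), that the ingredients of (\ref{eq:2.5}) really do yield an element of $\mathcal{E}(W)$ that is independent of the integer $k$ chosen, and that the resulting operation is bilinear in $a(x),b(x)$.

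The key technical step is a closure lemma: if $a(x),b(x),c(x)$ are pairwise $S_{trig}$-local with data $\{u_{i},v_{i},q_{i}\}$ as in (\ref{eq:2.2}), then $a(x)_{n}^{e}b(x)$ is again $S_{trig}$-local with $c(x)$ for every $n\in\Z$, with new scattering functions obtained as rational combinations of the original $q_{i}$'s. I would prove this by applying the $S_{trig}$-locality relations in three variables $x_{1},x_{2},x_{3}$ (for the pairs $(a,b)$, $(a,c)$, $(b,c)$) and then performing the trigonometric substitution $x_{1}=x_{2}e^{z}$ prescribed by (\ref{eq:2.5}); the residual rational dependence in $x_{2}/x_{3}$ produces the desired scattering data. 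Induction then shows that $\langle U\rangle_{e}$ is both $S_{trig}$-local and $Y_{\mathcal{E}}^{e}$-closed, and its minimality is automatic from the construction.

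For the weak quantum vertex algebra axioms on $(\langle U\rangle_{e},Y_{\mathcal{E}}^{e},\mathbf{1}_{W})$, the vacuum relations are nearly tautological: $Y_{\mathcal{E}}^{e}(\mathbf{1}_{W},z)b(x)=b(x)$ since we may take $k=0$, while $Y_{\mathcal{E}}^{e}(a(x),z)\mathbf{1}_{W}=a(xe^{z})\in \langle U\rangle_{e}[[z]]$ with constant term $a(x)$. The $\mathcal{S}$-Jacobi identity (\ref{eSjacobi}), with scattering data produced by the closure lemma, is the main obstacle: I would derive it by multiplying the $S_{trig}$-locality relation on $a,b$ by an appropriate combination of $\delta$-functions in an auxiliary variable $x_{0}$ and using the standard $\delta$-function identities together with the substitution $x_{1}=x_{2}e^{-x_{0}}$, so that the scattering ratios $x_{1}/x_{2}$ become the exponentials $e^{-x_{0}}$ appearing in the $f_{i}(-x_{0})$ factors of (\ref{eSjacobi}).

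Finally, declaring $Y_{W}(a(x),z)=a(z)$ exhibits $W$ as a $\phi$-coordinated $\langle U\rangle_{e}$-module: the two-point condition $(x_{1}-x_{2})^{k}Y_{W}(a,x_{1})Y_{W}(b,x_{2})\in \mathrm{Hom}(W,W((x_{1},x_{2})))$ is exactly (\ref{eq:2.3}), and the $\phi$-associativity identity
\begin{eqnarray*}
(x_{2}e^{z}-x_{2})^{k}Y_{W}(Y_{\mathcal{E}}^{e}(a(x),z)b(x),x_{2})=\left.\bigl((x_{1}-x_{2})^{k}Y_{W}(a,x_{1})Y_{W}(b,x_{2})\bigr)\right|_{x_{1}=x_{2}e^{z}}
\end{eqnarray*}
is just (\ref{eq:2.5}) with the generic point $x$ specialized to $x_{2}$. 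Thus the principal difficulty is concentrated in the closure lemma and the $\mathcal{S}$-Jacobi identity; the remaining axioms fall out by matching definitions.
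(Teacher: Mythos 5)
Note first that the paper does not prove this statement: it is quoted verbatim from \cite{Li-phi} (Theorem~5.4), so there is no ``paper's own proof'' to compare against. What you have produced is an independent reconstruction, and in outline it does follow the strategy actually used in \cite{Li-phi}: build $\langle U\rangle_{e}$ by iterating the $\mbox{}_{n}^{e}$--products, prove a closure lemma guaranteeing that $S_{trig}$-locality is preserved, read off the vacuum and creation axioms from (\ref{eq:2.5}) with $k=0$, and recognize the $\phi$-coordinated module axioms as direct restatements of (\ref{eq:2.3}) and (\ref{eq:2.5}). One small point you should make explicit in the creation step: for $Y_{\mathcal{E}}^{e}(a(x),z)\textbf{1}_{W}=a(xe^{z})$ to lie in $\langle U\rangle_{e}[[z]]$ you need $\langle U\rangle_{e}$ to be closed under the derivation $x\frac{d}{dx}$; this is automatic since $(x\frac{d}{dx})a(x)=a(x)_{-2}^{e}\textbf{1}_{W}$, but it is worth recording.

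The genuine gap is in your treatment of the $\mathcal{S}$-Jacobi identity. You propose to obtain (\ref{eSjacobi}) by dressing the $S_{trig}$-locality relation on $W$ with delta functions and substituting $x_{1}=x_{2}e^{-x_{0}}$. That manoeuvre at most yields the \emph{commutator} side of the identity, i.e.\ the statement that the operators $Y_{\mathcal{E}}^{e}(a(x),z_{1})$ and $Y_{\mathcal{E}}^{e}(b(x),z_{2})$ acting on $\langle U\rangle_{e}$ are $\mathcal{S}$-local in the \emph{additive} sense with scattering data $f_{i}(z)=q_{i}(e^{-z})$. It does not, by itself, produce the iterate term $x_{2}^{-1}\delta\bigl(\frac{x_{1}-x_{0}}{x_{2}}\bigr)Y_{\mathcal{E}}^{e}(Y_{\mathcal{E}}^{e}(a,x_{0})b,x_{2})$ on the right of (\ref{eSjacobi}). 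To get that, one must separately establish \emph{weak associativity} for $Y_{\mathcal{E}}^{e}$ on $\langle U\rangle_{e}$, i.e.\ that for $a(x),b(x),c(x)\in\langle U\rangle_{e}$ there is $l\ge 0$ with $(z_{0}+z_{2})^{l}Y_{\mathcal{E}}^{e}(a,z_{0}+z_{2})Y_{\mathcal{E}}^{e}(b,z_{2})c = (z_{0}+z_{2})^{l}Y_{\mathcal{E}}^{e}(Y_{\mathcal{E}}^{e}(a,z_{0})b,z_{2})c$, and only then combine $\mathcal{S}$-locality and weak associativity to assemble the full Jacobi identity. This two-step structure (associativity plus $\mathcal{S}$-locality $\Rightarrow$ $\mathcal{S}$-Jacobi) is the conceptual core of Li's construction in \cite{Li-nonlocal} and \cite{Li-phi}, and your sketch omits the associativity half entirely. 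Your closure lemma is also stated only for the old variable $x$; to run the induction you must check that the new scattering data produced after the substitution $x_{1}=xe^{z}$ remain in $\mathbb{C}(x)$ and that they are drawn from the same $S_{trig}$-local subspace (so that the induction that builds $U_{k+1}$ from $U_{k}$ can keep going). Both points are fixable, but as written the argument does not actually close.
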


\section{ Clifford-like algebra $\mathcal{A}$}

In this section, we first define the Clifford-like algebra $\mathcal{A}$
 and we then establish a PBW type basis and classify irreducible $\N$-graded $\mathcal{A}$-modules by using
a notion of Verma module. We also prove that the $\mathcal{A}$-module given by Jing-Nie is an irreducible $\N$-graded module
and there exists a non-degenerate symmetric bilinear form which is invariant in a certain sense.

First, we recall from  \cite{JN} the Clifford-like algebra.

\begin{definition}
{\em The algebra $\mathcal{A}$ is defined to be the associative algebra with identity over $\mathbb{C}$
with generators
$Y_n,\  Y_{n}^{\ast}\ (n\in \mathbb{Z})$, subject to relations
\begin{eqnarray}
&&Y_mY_n+Y_{n+1}Y_{m-1}=0, \label{eq:3.1}\\
&&Y_m^{\ast}Y_n^{\ast}+Y_{n+1}^{\ast}Y_{m-1}^{\ast}=0, \label{eq:3.2}\\
&&Y_mY_n^{\ast}+Y_{n-1}^{\ast}Y_{m+1}=\delta_{m+n,0} \label{eq:3.3}
\end{eqnarray}
for $m,n\in \mathbb{Z}$.}
\end{definition}

Note that for a certain purpose, we made a notational adjustment in the original definition of  \cite{JN};
the generator $Y_{n}^{\ast}$ here corresponds to $Y_{-n}^{\ast}$ therein.

It can be readily seen that $\mathcal{A}$ is a $\Z$-graded algebra with $\deg Y_{n}=-n=\deg Y^{*}_{n}$ for $n\in \Z$.
For $n\in \Z$, let $\mathcal{A}_{n}$ denote the homogeneous subspace of degree $n$.
It follows that $\mathcal{A}$ admits a derivation $d$ such that
\begin{eqnarray}
d(Y_{n})=-nY_{n}, \  \  d(Y^{*}_{n})=-nY^{*}_{n} \   \  \   \mbox{ for }n\in \Z.
\end{eqnarray}

Form generating functions
\begin{eqnarray}
Y(z)=\sum_{n\in \mathbb{Z}}Y_{n}z^{-n}, \  \   \ \
Y^{\ast}(z)=\sum_{n\in \mathbb{Z}}Y^{\ast}_{n}z^{-n}.
\end{eqnarray}
Then the defining relations (\ref{eq:3.1})-(\ref{eq:3.3}) can be written as
\begin{eqnarray}
&&Y(z)Y(w)+\frac{w}{z}Y(w)Y(z)=0, \label{eq:3.4}\\
&&Y^{\ast}(z)Y^{\ast}(w)+\frac{w}{z}Y^{\ast}(w)Y^{\ast}(z)=0, \label{eq:3.5}\\
&&Y(z)Y^{\ast}(w)+\frac{z}{w}Y^{\ast}(w)Y(z)=\delta\left(\frac{z}{w}\right). \label{eq:3.6}
\end{eqnarray}

As the first result, we have:

\begin{prop}\label{Abasis}
The following vectors form a basis of $\mathcal{A}$:
\begin{eqnarray}\label{A-basis}
Y_{m_1}\cdots Y_{m_r}Y^{*}_{n_1}\cdots Y^{*}_{n_s}\cdot 1,
\end{eqnarray}
where $r,s\ge 0,\ m_i,\ n_i\in \Z$, satisfying the condition
$$m_1>m_2+1>\cdots >m_r+(r-1)\  \mbox{ and }\  n_1>n_2+1>\cdots >n_s+(s-1).$$
\end{prop}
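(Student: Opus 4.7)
The plan is to apply Bergman's Diamond Lemma to the natural reduction system suggested by the three defining relations, which I orient as:
\begin{align*}
\text{(R1)}\quad & Y_m Y_n \mapsto -Y_{n+1}Y_{m-1} \ \ (m \le n), \quad Y_{n+1} Y_n \mapsto 0,\\
\text{(R2)}\quad & Y^*_m Y^*_n \mapsto -Y^*_{n+1}Y^*_{m-1} \ \ (m \le n), \quad Y^*_{n+1} Y^*_n \mapsto 0,\\
\text{(R3)}\quad & Y^*_n Y_m \mapsto -Y_{m+1}Y^*_{n-1} + \delta_{m+n,0}.
\end{align*}
A monomial is \emph{irreducible} exactly when every $Y$ precedes every $Y^*$ and each homogeneous block satisfies the strict gap condition of the statement, i.e.\ has the form displayed in (\ref{A-basis}). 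I would then fix a well-order on monomials under which every rewriting step strictly decreases: a lexicographic combination of (length; number of adjacencies matching a left-hand side of (R1)--(R3); within each maximal homogeneous block, a measure on the index sequence that decreases under the spreading replacement $(m,n) \mapsto (n+1, m-1)$ when $m \le n$). Termination together with the description of irreducible monomials then yields the spanning half of the proposition.

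The core of the argument is verifying that every overlap ambiguity resolves. The ambiguous triples fall into four families: $Y_aY_bY_c$ (overlap of two R1-pairs), $Y^*_aY^*_bY^*_c$ (two R2-pairs), $Y^*_aY_bY_c$ (R3 with R1), and $Y^*_aY^*_bY_c$ (R2 with R3). For each, I reduce the triple by both strategies and verify that further reduction to normal form yields the same element. The pure-type overlaps rely only on the quadratic relations and amount to standard fermionic-style identities. The mixed-type overlaps are where the constant $\delta_{m+n,0}$ from (R3) intervenes, producing shorter words that must cancel consistently after the remaining quadratic relations are applied to the surviving factors.

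The principal obstacle is the bookkeeping for the mixed-ambiguity resolutions: the $\delta$-corrections produce strictly shorter monomials that can themselves re-enter the reduction process, and one must track indices carefully to confirm that both strategies produce identical normal forms. Should the direct verification prove too delicate, I would fall back on the alternative route indicated in the introduction: construct a faithful $\mathcal{A}$-module, for instance via a realization inside an infinite-dimensional Clifford algebra for which the PBW theorem is classical, and show that the images of the candidate basis elements are linearly independent. Combining linear independence from either route with the spanning from the first step completes the proof.
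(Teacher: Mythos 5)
Your proposal matches the paper's proof in all essentials: the paper establishes linear independence by Bergman's Diamond Lemma applied to the same reduction system, checking exactly the four families of overlap ambiguities $Y_mY_nY_k$, $Y^*_mY^*_nY^*_k$, $Y^*_pY_mY_n$, $Y^*_mY^*_nY_p$ that you identify, and it establishes spanning via a concrete termination measure (the ``total reverse number'' counting pairs $(i,j)$ with $m_i<m_j+(j-i)$) which is a precise instance of the in-block measure you leave implicit. Your fallback route through the Clifford algebra $\mathcal{C}$ and its smash product $\mathcal{C}\sharp\C[\langle\sigma\rangle]$ is also sound and is precisely how the paper establishes its second PBW basis (Proposition~\ref{pbwbasis2}).
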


\begin{proof} We first prove that $\mathcal{A}$ is linearly spanned by these vectors.
 Let $\mathcal{A}_{Y}$ denote the subalgebra generated by $Y_{n}$ for $n\in \Z$ and let $\mathcal{A}_{Y^{*}}$
denote the subalgebra generated by $Y^{*}_{n}$ for $n\in \Z$.
Due to relation (\ref{eq:3.3}), we have $\mathcal{A}=\mathcal{A}_{Y}\mathcal{A}_{Y^{*}}$.
Then it suffices to prove that $\mathcal{A}_{Y}$ and $\mathcal{A}_{Y^{*}}$ are linearly spanned
by those vectors in (\ref{A-basis}) with $s=0$ and $r=0$, respectively.

Let ${\bf m}=(m_1,m_2,\dots,m_r)$ be an ordered $r$-tuple of integers. We write $Y_{\bf m}$ for $Y_{m_1}\cdots Y_{m_r}$.
For each pair $(i,j)$ with $1\le i<j\le r$, we assign a reverse number $1$ if $m_i<m_j+(j-i)$ and $0$ otherwise.
Then define the total reverse number
 $N({\bf m})$ to be the sum of the reverse numbers for all the pairs $(i,j)$. We see that $N({\bf m})=0$ if and only if
 $m_{i}\ge m_{i+1}+1$ for all $1\le i<r$.
Assume $N({\bf m})>0$. Then there exists $1\le i<r$ such that $m_i<m_{i+1}+1$. We have
$$Y_{\bf m}=Y_{m_1}\cdots Y_{m_{i-1}}(Y_{m_i}Y_{m_{i+1}})Y_{m_{i+2}}\cdots Y_{m_r}
=-Y_{m_1}\cdots Y_{m_{i-1}}(Y_{m_{i+1}+1}Y_{m_{i}-1})Y_{m_{i+2}}\cdots Y_{m_r},$$
where
$$N({\bf m})=N(m_1,\dots,m_{i-1},m_{i+1}+1,m_i-1,m_{i+2},\dots,m_r)+1.$$
Then it follows from induction (on the reverse number) that $\mathcal{A}_{Y}$ is linearly spanned
by $1$ and $Y_{\bf m}$ where ${\bf m}\in \Z^{r}$ for $r\ge 1$ with $N({\bf m})=0$.
Note that for $m\in \Z$, since $Y_{m}Y_{m-1}+Y_{m}Y_{m-1}=0$, we have  $Y_{m}Y_{m-1}=0$.
Then $\mathcal{A}_{Y}$  is linearly spanned by those vectors in (\ref{A-basis}) with $s=0$.
Similarly,  $\mathcal{A}_{Y^{*}}$  is linearly spanned by those vectors in (\ref{A-basis}) with  $r=0$.
Consequently, $\mathcal{A}$ is linearly spanned by the vectors in (\ref{A-basis}).

To establish the linear independence, we apply the Diamond lemma (see \cite{bergman}).
Notice that the following are all the ambiguities:
$$Y_{m}Y_{n}Y_{k}, \  \  \   Y^{*}_{p}Y_{m}Y_{n},\   \  \   Y^{*}_{m}Y^{*}_{n}Y_{p},\   \  \  Y^{*}_{m}Y^{*}_{n}Y^{*}_{k},$$
where $m,n,k,p\in \Z$ with $m\le n+1\le k+2$.
 We have
$$(Y_{m}Y_{n})Y_{k}=-Y_{n+1}Y_{m-1}Y_{k}=Y_{n+1}Y_{k+1}Y_{m-2}=-Y_{k+2}Y_{n}Y_{m-2},$$
$$Y_{m}(Y_{n}Y_{k})=-Y_{m}Y_{k+1}Y_{n-1}=Y_{k+2}Y_{m-1}Y_{n-1}=-Y_{k+2}Y_{n}Y_{m-2},$$
so the first ambiguity can be resolved.  Similarly, the last ambiguity can be resolved.
For the second ambiguity, we have
\begin{eqnarray*}
&&Y^{*}_{p}(Y_{m}Y_{n})=-Y^{*}_{p}Y_{n+1}Y_{m-1}=Y_{n}Y^{*}_{p+1}Y_{m-1}-\delta_{n+p+1,0}Y_{m-1}\\
&=&-Y_{n}Y_{m-2}Y^{*}_{p+2}+\delta_{p+m,0}Y_{n}-\delta_{n+p+1,0}Y_{m-1},
\end{eqnarray*}
\begin{eqnarray*}
&&(Y^{*}_{p}Y_{m})Y_{n}=-Y_{m-1}(Y^{*}_{p+1}Y_{n})+\delta_{m+p,0}Y_{n}\\
&=&(Y_{m-1}Y_{n-1})Y^{*}_{p+2}-\delta_{n+p+1,0}Y_{m-1}+\delta_{m+p,0}Y_{n}\\
&=&-Y_{n}Y_{m-2}Y^{*}_{p+2}-\delta_{n+p+1,0}Y_{m-1}+\delta_{m+p,0}Y_{n}.
\end{eqnarray*}
Thus the second can be resolved. The third  ambiguity can also be resolved as
\begin{eqnarray*}
&&(Y^{*}_{m}Y^{*}_{n})Y_{p}=-Y^{*}_{n+1}Y^{*}_{m-1}Y_{p}
=Y^{*}_{n+1}Y_{p-1}Y^{*}_{m}-\delta_{p+m-1,0}Y^{*}_{n+1}\\
&&\  \  \  \   =-Y_{p-2}Y^{*}_{n+2}Y^{*}_{m}+\delta_{p+n,0}Y^{*}_{m}-\delta_{p+m-1,0}Y^{*}_{n+1},
\end{eqnarray*}
\begin{eqnarray*}
Y^{*}_{m}(Y^{*}_{n}Y_{p})&=&-Y^{*}_{m}Y_{p-1}Y^{*}_{n+1}+\delta_{p+n,0}Y^{*}_{m}\\
&=&Y_{p-2}Y^{*}_{m+1}Y^{*}_{n+1}-\delta_{p+m-1,0}Y^{*}_{n+1}+\delta_{p+n,0}Y^{*}_{m}\\
&=&-Y_{p-2}Y^{*}_{n+2}Y^{*}_{m}-\delta_{p+m-1,0}Y^{*}_{n+1}+\delta_{p+n,0}Y^{*}_{m}.
\end{eqnarray*}
Then it follows from the diamond lemma that those vectors are linearly independent.
\end{proof}

Next, we recall from \cite{JN} the explicit vertex-operator realization of $\mathcal{A}$.
Let  $\mathcal{H}$ be the infinite-dimensional Heisenberg algebra with base
 $\{ a_{n}\ | \ n\in \Z\backslash \{0\}\}\cup \{c\}$ such that
$[c,\mathcal{H}]=0$ and
$$[a_{m}, a_{n}]=m\delta_{m+n,0}c\   \   \  \mbox{ for }m,n\in \Z\backslash \{0\}.$$
 We know that $\mathcal{H}$  acts irreducibly on
polynomial algebra $\C[x_{1},x_{2},\dots]$, which is denoted by $M(1)$,  with
$$c=1,\   \   \   \  a_{n}=n\frac{\partial}{\partial x_{n}},\   \  \   \  a_{-n}=x_{n}\   \   \   \mbox{ for }n\ge 1.$$
It was proved in \cite{JN} that $M(1)$ is an $\mathcal{A}$-module with
\begin{eqnarray}
 &&Y(z) = \exp\left(\sum_{n=1}^{\infty}\frac{a_{-n}}{n} z^n\right)
 \exp\left(-\sum_{n=1}^{\infty}\frac{a_{n}}{n}(z^{-n}+z^n)\right),\\
&& Y^{*}(z) =(1-z^2)\exp\left(-\sum_{n=1}^{\infty}\frac{a_{-n}}{n} z^n\right)
 \exp\left(\sum_{n=1}^{\infty}\frac{a_{n}}{n}(z^{-n}+z^n)\right).
\end{eqnarray}
Let $\mathcal{P}$ denote the set of all partitions, where a partition is a  sequence of
 non-increasing nonnegative integers with only finitely many nonzero terms. For a partition $\lambda$, the number of nonzero terms
 is called the {\em length}, while the sum of all terms is called the {\em weight}, denoted by $|\lambda|$.
For $\lambda=(\lambda_1,\lambda_2,\dots,\lambda_r)\in \mathcal{P}$, set
 \begin{eqnarray}
 Y_{-\lambda}=Y_{-\lambda_1}Y_{-\lambda_2}\cdots Y_{-\lambda_r},\   \   \
 Y^{*}_{-\lambda}=Y^{*}_{-\lambda_1}Y^{*}_{-\lambda_2}\cdots Y^{*}_{-\lambda_r}.
 \end{eqnarray}
 For a partition $\lambda$, denote by $\lambda'$ the dual of $\lambda$. It was proved therein that
\begin{eqnarray}\label{eY=Y*}
Y_{-\lambda}\cdot 1=(-1)^{|\lambda|}Y^{*}_{-\lambda'}\cdot 1
\end{eqnarray}
for $\lambda\in \mathcal{P}$, and  moreover,
$\{ Y_{-\lambda}\cdot 1 \ |\  \lambda\in \mathcal{P}\}$ is a base of $M(1)$.
In this realization, we have
\begin{eqnarray}\label{esingularM1}
Y_{n}\cdot 1=\delta_{n,0}1,\   \   \   \   Y^{*}_{n}\cdot 1=\delta_{n,0}1\   \   \mbox{ for }n\ge 0.
\end{eqnarray}

\begin{rmk}\label{rmk:3.1}
{\em  As we need, we here introduce an associative algebra.
Let $\mathcal{C}$ denote the associative algebra with identity over $\C$, generated by
 $a_{n},\ b_{n} \ (n\in \mathbb{Z}),$ subject to relations
\begin{eqnarray*}
a_ma_n+a_{n}a_{m}=0, \   \  \
b_mb_n+b_{n}b_{m}=0, \   \   \
a_mb_n+b_{n}a_{m}=\delta_{m+n+1,0}\   \   \mbox{ for }m,n\in \Z.
\end{eqnarray*}
Algebra $\mathcal{C}$ can be defined alternatively as the Clifford algebra of the vector space
with basis $\{ a_{n},b_n\ |\ n\in \Z\}$
equipped with the non-degenerate symmetric bilinear form $\langle\cdot,\cdot\rangle$ defined by
$$\langle a_{m},a_{n}\rangle=0=\langle b_{m},b_{n}\rangle,\   \   \langle a_{m},b_{n}\rangle=\delta_{m+n+1,0}
\   \   \mbox{ for }m,n\in \Z.$$
 It is well known that $\mathcal{C}$ has a basis consisting of vectors
\begin{eqnarray}\label{clifford-basis}
a_{m_1}\cdots a_{m_r}b_{n_1}\cdots b_{n_s}\cdot 1
\end{eqnarray}
for $r,s\ge 0,\ m_i,n_i\in \Z$ with $m_1<m_2<\cdots <m_r$ and $n_1<n_2<\cdots <n_s$.}
\end{rmk}

Next,  we use Clifford algebra $\mathcal{C}$ to give another realization of $\mathcal{A}$.
From the defining relations, it can be readily seen that  $\mathcal{C}$ admits an automorphism $\sigma$ such that
 $$\sigma(a_n)=a_{n+1},\   \   \sigma(b_n)=b_{n-1}\  \  \mbox{  for }n\in \Z.$$
Let $\mathbb{C}[\langle \sigma\rangle]$ be the group
algebra of the group $\langle \sigma\rangle$ generated by $\sigma$.
Then we have a smash product $\mathcal{C}\sharp \mathbb{C}[\langle\sigma\rangle]$, where
$\mathcal{C}\sharp \mathbb{C}[\langle\sigma\rangle]=\mathcal{C}\otimes \mathbb{C}[\langle \sigma\rangle]$ as a vector space and
 \begin{eqnarray}\label{smash-product}
 (u\otimes \sigma^{m})(v\otimes \sigma^{n})=u\sigma^{m}(v)\otimes \sigma^{m+n}
\   \   \   \mbox{ for }u,v\in \mathcal{C},\ m,n\in \Z.
\end{eqnarray}
We have:

\begin{lem}
There exists an algebra homomorphism $\pi$ from $\mathcal{A}$ to
$\mathcal{C}\sharp \mathbb{C}[\langle\sigma\rangle]$ such that
$$\pi (Y_{n})=a_{n}\otimes\sigma,\   \   \pi (Y^{*}_{n})=b_{n}\otimes\sigma^{-1}\   \   \mbox{  for }n\in \Z.$$
\end{lem}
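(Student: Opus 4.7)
The plan is to invoke the universal property of $\mathcal{A}$: since $\mathcal{A}$ is defined by generators $Y_n, Y^{*}_n$ subject to the explicit relations (\ref{eq:3.1})--(\ref{eq:3.3}), it suffices to check that the proposed images
\[
\pi(Y_n) = a_n \otimes \sigma, \qquad \pi(Y^{*}_n) = b_n \otimes \sigma^{-1} \quad (n\in\Z)
\]
in $\mathcal{C}\sharp\mathbb{C}[\langle\sigma\rangle]$ satisfy the corresponding three families of relations. Then $\pi$ extends uniquely to an algebra homomorphism on all of $\mathcal{A}$.

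The verification is purely computational via the smash product rule (\ref{smash-product}) together with the Clifford relations and the intertwining rules $\sigma(a_n)=a_{n+1}$, $\sigma^{-1}(b_n)=b_{n+1}$. For the first relation, I compute
\[
(a_m\otimes\sigma)(a_n\otimes\sigma) + (a_{n+1}\otimes\sigma)(a_{m-1}\otimes\sigma)
= (a_m a_{n+1} + a_{n+1} a_m)\otimes\sigma^2 = 0,
\]
using the anti-commutativity of the $a_i$'s. The second relation is analogous: the twist by $\sigma^{-1}$ sends the inside indices to $b_{n+1}$ and $b_m$ respectively, and anti-commutativity of the $b_i$'s kills the sum. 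For the mixed relation I compute
\[
(a_m\otimes\sigma)(b_n\otimes\sigma^{-1}) + (b_{n-1}\otimes\sigma^{-1})(a_{m+1}\otimes\sigma)
= (a_m b_{n-1} + b_{n-1} a_m)\otimes 1,
\]
and the Clifford relation $a_m b_{n-1} + b_{n-1} a_m = \delta_{m+(n-1)+1,0} = \delta_{m+n,0}$ produces exactly the right-hand side of (\ref{eq:3.3}).

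I do not expect any genuine obstacle here: the definition of $\mathcal{C}$ and the choice of $\sigma$-shifts were clearly designed so that the $\sigma$-twists inside the smash product exactly absorb the index shifts appearing in the Zamolodchikov--Faddeev relations of $\mathcal{A}$. The only thing that requires minor care is getting the signs of the $\sigma$-action on $b_n$ correct, i.e.\ that $\sigma^{-1}$ (not $\sigma$) raises the index on $b_n$; once this is noted, all three relations drop out of one-line computations. No additional structural input about $\mathcal{A}$ (such as the PBW basis from Proposition \ref{Abasis}) is needed for this statement.
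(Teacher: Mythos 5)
Your proposal is correct and follows exactly the paper's argument: verify that the proposed images of the generators satisfy relations (\ref{eq:3.1})--(\ref{eq:3.3}) by a direct computation using the smash product rule (\ref{smash-product}), the Clifford relations, and the $\sigma$-shift formulas. Your index bookkeeping (including $\sigma^{-1}(b_n)=b_{n+1}$ and $\sigma^{-1}(a_{m+1})=a_m$) is accurate, and the paper's own proof is simply a terser version of the same calculation.
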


\begin{proof} Set $A_{n}=a_{n}\otimes\sigma,\ B_{n}=b_{n}\otimes\sigma^{-1}$ for $n\in \Z$. It is straightforward to show that
\begin{eqnarray*}
&&A_mA_n+A_{n+1}A_{m-1}=0, \\
&&B_mB_n+B_{n+1}B_{m-1}=0, \\
&&A_mB_n+B_{n-1}A_{m+1}=\delta_{m+n,0}\   \   \   \mbox{ for }m,n\in \Z.
\end{eqnarray*}
For example, we have
\begin{eqnarray*}
A_mB_n+B_{n-1}A_{m+1}
&=&(a_{m}\otimes \sigma)(b_{n}\otimes \sigma^{-1})+(b_{n-1}\otimes \sigma^{-1})(a_{m+1}\otimes \sigma)\\
&=&a_{m}b_{n-1}\otimes 1+b_{n-1}a_{m}\otimes 1\\
&=&\delta_{m+n+1,0}.
\end{eqnarray*}
 Then it follows.
 \end{proof}

Using algebra homomorphism $\pi$ we obtain another PBW type basis of $\mathcal{A}$.

\begin{prop}\label{pbwbasis2}
The following vectors form a basis of $\mathcal{A}$:
\begin{eqnarray}\label{A-basis-2}
Y_{m_1}\cdots Y_{m_r}Y^{*}_{n_1}\cdots Y^{*}_{n_s}\cdot 1,
\end{eqnarray}
where $r,s\ge 0,\ m_i,\ n_i\in \Z$, satisfying the condition
$$m_1\le m_2\le \cdots \le m_r\  \mbox{ and }\  n_1\le n_2\le \cdots \le n_s.$$
\end{prop}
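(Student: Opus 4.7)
The plan is to read off both linear independence and spanning from the Clifford-algebra homomorphism $\pi$ just constructed, using the fact that the non-decreasing condition in (\ref{A-basis-2}) is exactly the one that sends $\pi$-images to standard basis monomials of $\mathcal{C}\sharp\mathbb{C}[\langle\sigma\rangle]$.

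First I will compute the image of a candidate basis vector. By iterated use of the smash-product rule (\ref{smash-product}) together with $\sigma(a_n)=a_{n+1}$ and $\sigma(b_n)=b_{n-1}$, an easy induction gives
$$\pi(Y_{m_1}\cdots Y_{m_r}Y^{*}_{n_1}\cdots Y^{*}_{n_s})=a_{p_1}a_{p_2}\cdots a_{p_r}\,b_{q_1}b_{q_2}\cdots b_{q_s}\otimes \sigma^{r-s},$$
where $p_i=m_i+(i-1)$ and $q_j=n_j-r+(j-1)$. Under the hypotheses of the proposition, the conditions $m_1\le\cdots\le m_r$ and $n_1\le\cdots\le n_s$ are equivalent to $p_1<p_2<\cdots<p_r$ and $q_1<q_2<\cdots<q_s$ respectively; hence by Remark~\ref{rmk:3.1} each image is a distinct basis vector of $\mathcal{C}$ tensored with a specific power of $\sigma$. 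Distinct tuples $(m_\bullet,n_\bullet)$ give distinct images (the counts $r,s$ and the sorted index sets can be read off from the Clifford factor), so linear independence of (\ref{A-basis-2}) in $\mathcal{A}$ follows.

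For spanning, I will run the same computation on the basis of Proposition~\ref{Abasis}. There, the conditions $m_1>m_2+1>\cdots$ and $n_1>n_2+1>\cdots$ translate to strictly \emph{decreasing} shifted sequences $p_\bullet$ and $q_\bullet$, so after anticommuting the $a$'s (and separately the $b$'s) in $\mathcal{C}$ one still obtains a single basis vector of $\mathcal{C}\sharp\mathbb{C}[\langle\sigma\rangle]$, up to a sign $\pm 1$. Consequently $\pi$ sends the basis of Proposition~\ref{Abasis} to a linearly independent set, so $\pi$ is injective. Now each strictly increasing Clifford index datum $(p_\bullet;q_\bullet)$ is realized, up to sign, by a unique vector in (\ref{A-basis-2}) and by a unique vector from Proposition~\ref{Abasis}'s basis; injectivity of $\pi$ forces the two to coincide up to sign inside $\mathcal{A}$. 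The two spans therefore agree, so (\ref{A-basis-2}) spans $\mathcal{A}$.

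The only genuinely nontrivial ingredient is keeping the two shift conventions straight: the PBW basis of Proposition~\ref{Abasis} is indexed by strictly decreasing $p_\bullet,q_\bullet$, whereas (\ref{A-basis-2}) is indexed by the strictly increasing rearrangement, with the two related by the order-reversing bijection on the same Clifford-index data. I do not expect a substantive obstacle beyond this bookkeeping.
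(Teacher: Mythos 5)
Your argument is correct and follows essentially the same route as the paper: compute $\pi$ on monomials, note that the non-decreasing index condition corresponds exactly to strictly increasing shifted Clifford indices, and conclude from Remark~\ref{rmk:3.1} together with the identity $\pi(\mathcal{A})=\coprod_n(\mathcal{C}(n)\otimes\C\sigma^n)$. The one thing you make explicit that the paper leaves to the reader in its terse ``It then follows'' is the injectivity of $\pi$, which you deduce by observing that $\pi$ sends the already-established basis of Proposition~\ref{Abasis} to (signed) distinct standard Clifford monomials; this is exactly the missing step needed to turn surjectivity onto the image into the claim that the new family is a basis of $\mathcal{A}$, so your extra sentence is a genuine improvement in clarity rather than a detour.
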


\begin{proof} It is clear that Clifford algebra $\mathcal{C}$ is a $\Z$-graded algebra with
$$\deg a_{n}=1 \   \mbox{ and }\   \deg b_{n}=-1\   \   \   \mbox{ for }n\in \Z.$$
For $k\in \Z$, denote by $\mathcal{C}(k)$ the homogeneous subspace of degree $k$.
We see that ${\mathcal{C}}(k)$ is linearly spanned by vectors
\begin{eqnarray}\label{clifford-basis-k}
a_{m_1}\cdots a_{m_r}b_{n_1}\cdots b_{n_s}\cdot 1
\end{eqnarray}
for $r,s\ge 0,\ m_i,n_i\in \Z$ with $r-s=k$, $m_1<m_2<\cdots <m_r$ and $n_1<n_2<\cdots <n_s$.
 From the definition of $\pi$ we have
$$\pi (\mathcal{A})\subset \sum_{k\in \Z}\mathcal{C}(k)\otimes \C \sigma^{k}\subset
\mathcal{C}\sharp \mathbb{C}[\langle\sigma\rangle].$$
In particular, for any $r,s\ge 0,\ m_1,\dots,m_r,\ n_1,\dots,n_s\in \Z$, we have
\begin{eqnarray}
&&\pi (Y_{m_1}\cdots Y_{m_r}Y^{*}_{n_1}\cdots Y^{*}_{n_s})\nonumber\\
&=&(a_{m_1}\otimes \sigma)(a_{m_2}\otimes \sigma)\cdots (a_{m_r}\otimes \sigma)
(b_{n_1}\otimes \sigma^{-1})\cdots (b_{n_s}\otimes \sigma^{-1})\nonumber\\
&=&(a_{m_1}a_{m_2+1}\cdots a_{m_r+(r-1)}\otimes \sigma^{r})(b_{n_1}b_{n_2+1}\cdots b_{n_s+(s-1)}\otimes \sigma^{-s})
\nonumber\\
&=&a_{m_1}a_{m_2+1}\cdots a_{m_r+(r-1)}b_{n_1-r}b_{n_2+1-r}\cdots b_{n_s+(s-1)-r}\otimes \sigma^{r-s}.
\end{eqnarray}
From this we see that
$$\pi(\mathcal{A})=\coprod_{n\in \Z}\left({\mathcal{C}}(n)\otimes \C \sigma^{n}\right).$$
It then follows.
\end{proof}


\begin{definition}\label{restricted}
{\em An $\mathcal{A}$-module $W$ is said to be {\em restricted} if
                    for every $w\in W,$ $Y_{n}w = 0= Y^{\ast}_{n}w$
                    for $n$ sufficiently large, or equivalently, if
                    $Y(x), Y^{\ast}(x)\in {\mathcal{E}}(W).$ }
\end{definition}

Recall that $\mathcal{A}$ is a $\Z$-graded algebra with $\deg Y_{n}=-n=\deg Y^{*}_{n}$ for $n\in \Z$, where
for $n\in \Z$, $\mathcal{A}_{n}$ denotes the homogeneous subspace of degree $n$. Set
\begin{eqnarray}
\mathcal{A}_{-}=\oplus_{n\ge 1}\mathcal{A}_{-n}.
\end{eqnarray}
Notice that $\mathcal{A}_{0}$ is a subalgebra of $\mathcal{A}$ and the subspace
$\sum_{n\ge 1}\mathcal{A}_{n}\mathcal{A}_{-n}$, denoted by $I_{0}$, is a
(two-sided) ideal of $\mathcal{A}_{0}$. Set
\begin{eqnarray}
\bar{\mathcal{A}}_{0}=\mathcal{A}_{0}/I_{0}.
\end{eqnarray}

\begin{rmk}\label{zhualgebra}
{\em We here show that there is a natural one-to-one correspondence between the set of the equivalence classes of $\N$-graded irreducible $\mathcal{A}$-modules and that of irreducible $\bar{\mathcal{A}}_{0}$-modules.
Let $W=\oplus_{n\in \N}W(n)$ be any $\N$-graded $\mathcal{A}$-module with $W(0)\ne 0$. Then
$W(0)$ is an $\mathcal{A}_{0}$-module and  $(A_{n}A_{-n})W(0)=0$ for $n\ge 1$. Consequently,
$W(0)$ is naturally an $\bar{\mathcal{A}}_{0}$-module. If $W$ (as a graded module) is irreducible,
then  $W(0)$ is an irreducible $\mathcal{A}_{0}$-module and hence an irreducible $\bar{\mathcal{A}}_{0}$-module.
On the other hand, let $U$ be an $\bar{\mathcal{A}}_{0}$-module. Then $U$ is naturally an $\mathcal{A}_{0}$-module
such that $(A_{n}A_{-n})W(0)=0$ for $n\ge 1$.
Let $\mathcal{A}_{-}$ act trivially on $U$, making $U$ an $(\mathcal{A}_{-}+\mathcal{A}_{0})$-module.
View $U$ as a $\Z$-graded $(\mathcal{A}_{-}+\mathcal{A}_{0})$-module with $U=U(0)$. Form an induced module
\begin{eqnarray}\label{Verma-module}
M_{\mathcal{A}}(U)=\mathcal{A}\otimes_{(\mathcal{A}_{-}+\mathcal{A}_{0})}U,
\end{eqnarray}
which is a $\Z$-graded $\mathcal{A}$-module. In fact, it is $\N$-graded. From definition,
the degree $0$ subspace of $M_{\mathcal{A}}(U)$ is the quotient space of $\mathcal{A}_{0}\otimes U$
by the subspace spanned by
$$ab\otimes u-a\otimes bu,\  \  xy\otimes u- x\otimes yu\ (=xy\otimes u)$$
for $a,b\in \mathcal{A}_{0},\ u\in U$ and for $x\in \mathcal{A}_{n},\ y\in  \mathcal{A}_{-n}$ with $n\ge 1$.
We see that the degree $0$ subspace of $M_{\mathcal{A}}(U)$ is
$\mathcal{A}_{0}\otimes _{\mathcal{A}_0}U=U$. If $U$ is irreducible, then $M_{\mathcal{A}}(U)$ has a unique
irreducible graded quotient module $L_{\mathcal{A}}(U)$ with $U$ as the degree $0$ subspace.}
\end{rmk}

\begin{lem}\label{omega}
Let $W$ be an $\mathcal{A}$-module and let $w\in W$ such that
$Y_{n}w=0=Y^{*}_{n}w$ for $n\ge 1$.
Then $\mathcal{A}_{-}\cdot w=0$.
\end{lem}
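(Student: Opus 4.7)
The plan is to reduce the claim to a statement about PBW basis elements of $\mathcal{A}_{-}$ in the form of Proposition \ref{pbwbasis2} and then proceed by induction on the length $r+s$ of such a monomial. By Proposition \ref{pbwbasis2}, $\mathcal{A}_{-}$ is spanned by monomials $u = Y_{m_1} \cdots Y_{m_r} Y^{*}_{n_1} \cdots Y^{*}_{n_s}$ with $m_1 \leq \cdots \leq m_r$, $n_1 \leq \cdots \leq n_s$, and $\sum_i m_i + \sum_j n_j \geq 1$, so it will suffice to show that each such $u$ satisfies $uw=0$.

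The key observation is that the non-decreasing ordering makes $m_r$ and $n_s$ the largest indices, so the degree condition forces at least one of $m_r, n_s$ to be $\geq 1$. I would first dispose of the easy subcases where the rightmost generator of $u$ already annihilates $w$: if $s = 0$ then $m_r \geq 1$ and $Y_{m_r} w = 0$, and if $s \geq 1$ with $n_s \geq 1$ then $Y^{*}_{n_s} w = 0$, yielding $uw=0$ at once.

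The substantive subcase is $s \geq 1$ with $n_s \leq 0$, which forces $\sum_i m_i \geq 1$ and hence $m_r \geq 1$. Here the plan is to invoke the rearrangement $Y_m Y^{*}_n = -Y^{*}_{n-1} Y_{m+1} + \delta_{m+n,0}$ of relation (\ref{eq:3.3}) and commute the factor $Y_{m_r}$ rightward past $Y^{*}_{n_1}, \ldots, Y^{*}_{n_s}$ one step at a time. This will produce a straightened main term of the shape $(-1)^s Y_{m_1} \cdots Y_{m_{r-1}} Y^{*}_{n_1 - 1} \cdots Y^{*}_{n_s - 1} Y_{m_r + s}$, whose rightmost factor $Y_{m_r+s}$ has index $\geq 1$ and hence kills $w$, plus a sum of $\delta$-correction terms obtained by deleting a pair $(Y_{m_r}, Y^{*}_{n_j})$ from $u$ and shifting the indices $n_1, \ldots, n_{j-1}$ down by $1$.

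The main bookkeeping obstacle, and the only delicate point in the argument, will be to verify that each such $\delta$-correction term is again a PBW basis element (in the form of Proposition \ref{pbwbasis2}) of length $r+s-2$ lying in $\mathcal{A}_{-}$, so that the inductive hypothesis applies directly. The surviving $Y^{*}$-indices remain non-decreasing because $n_{j-1} \leq n_{j+1}$ implies $n_{j-1} - 1 \leq n_{j+1}$, and the delta constraint $m_r + (j-1) + n_j = 0$ forces the remaining monomial to carry the same degree as $u$, placing it in $\mathcal{A}_{-}$. Once this is in hand, the induction closes and the lemma follows.
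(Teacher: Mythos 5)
Your proposal is correct and follows essentially the same route as the paper: reduce to PBW monomials (Proposition \ref{pbwbasis2}), dispose of the cases where the rightmost generator already kills $w$, and in the remaining case with $n_s\le 0$ and $m_r\ge 1$ commute $Y_{m_r}$ rightward through $Y^{*}_{n_1},\dots,Y^{*}_{n_s}$ via relation (\ref{eq:3.3}) to obtain a main term ending in $Y_{m_r+s}$ together with $\delta$-correction terms of length $r+s-2$, closing by induction. Your extra remark that the correction terms are again PBW monomials of unchanged degree (since removing $Y_{m_r}$, $Y^{*}_{n_j}$ and shifting $n_1,\dots,n_{j-1}$ down by one removes exactly $m_r+n_j+(j-1)=0$ from the index sum) makes explicit a point the paper leaves implicit, but does not change the underlying argument.
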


\begin{proof} It  suffices to show that  $\mathcal{A}_{-k}w=0$ for  $k\ge 1$.
In view of Proposition \ref{pbwbasis2}, $\mathcal{A}_{-k}$ is linearly spanned by those vectors
in (\ref{A-basis-2}) with $r,s\ge 0,\ m_i,\ n_i\in \Z$, satisfying $r+s\ge 1$,
$$m_1\le m_2\le \cdots \le m_r, \   \  n_1\le n_2\le \cdots \le n_s,$$
and $m_1+m_2+\cdots +m_r+n_1+\cdots +n_s=k$.
We now  show that $Xw=0$ for every such vector $X$ by induction on $r+s$.
If $s=0$, we must have $m_r\ge 1$, so that
$Xw=0$. Similarly, we have $Xw=0$ if $r=0$. Thus it is true for $r+s=1$.
Assume $r,s\ge 1$. If $n_s\ge 1$, we also have $Xw=0$.
Suppose $n_s\le 0$, which implies that $n_i\le 0$ for $1\le i\le s$. Then $m_r\ge 1$.
Using the defining relations of $\mathcal{A}$, we have
\begin{eqnarray}\label{induction-step}
&&Y_{m_r}Y^{*}_{n_1}Y^{*}_{n_2}\cdots Y^{*}_{n_s}=-Y^{*}_{n_1-1}Y_{m_r+1}Y^{*}_{n_2}\cdots Y^{*}_{n_s}
+\delta_{m_r+n_1,0}Y^{*}_{n_2}\cdots Y^{*}_{n_s}\nonumber\\
&=&Y^{*}_{n_1-1}Y^{*}_{n_2-1}Y_{m_r+2}Y^{*}_{n_3}\cdots Y^{*}_{n_s}
-\delta_{m_r+1+n_2,0}Y^{*}_{n_1-1}Y^{*}_{n_3}\cdots Y^{*}_{n_s}
+\delta_{m_r+n_1,0}Y^{*}_{n_2}\cdots Y^{*}_{n_s}\nonumber\\
&=&(-1)^{s}Y^{*}_{n_1-1}Y^{*}_{n_2-1}\cdots Y^{*}_{n_s-1}Y_{m_r+s}\nonumber\\
&&+\delta_{m_r+n_1,0}Y^{*}_{n_2}\cdots Y^{*}_{n_s}
+(-1)^{s-1}\delta_{m_r+s-1+n_s,0}Y^{*}_{n_1-1}Y^{*}_{n_2-1}\cdots Y^{*}_{n_{s-1}}\nonumber\\
&&+\sum_{j=1}^{s-1}(-1)^{j}\delta_{m_r+j+n_{j},0}Y^{*}_{n_1-1}\cdots Y^{*}_{n_{j}-1}Y^{*}_{n_{j+2}}\cdots Y^{*}_{n_{s}}.
\end{eqnarray}
It then follows from this and induction hypothesis  that $Xw=0$.
\end{proof}

We have the following characterization of algebra $\bar{\mathcal{A}}_{0}$:

\begin{prop}
The algebra $\bar{\mathcal{A}}_{0}$ is isomorphic to the algebra generated by $\bar{Y}_0$ and $\bar{Y}^{*}_{0}$,
subject to relation $\bar{Y}_{0}\bar{Y}^{*}_{0}=\bar{Y}^{*}_{0}\bar{Y}_{0}=1$.  Equivalently,
$\bar{\mathcal{A}}_{0}$ is isomorphic to the group algebra $\C[\Z]$ of $\Z$.
\end{prop}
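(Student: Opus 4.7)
The plan is to build an explicit isomorphism $\psi\colon\C[\Z]\to\bar{\mathcal A}_0$ in three steps: verify the defining relations hold in $\bar{\mathcal A}_0$, prove $\psi$ is surjective via the PBW basis of Proposition~\ref{pbwbasis2}, and prove $\psi$ is injective via scalar twists of the Jing--Nie module.

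For the relations, I specialise (\ref{eq:3.3}) at $m=n=0$ and at $m=-1,n=1$: since $Y_{-1}^{*}Y_1$ and $Y_{-1}Y_1^{*}$ both lie in $\mathcal A_1\mathcal A_{-1}\subset I_0$, these specialisations give $\bar Y_0\bar Y_0^{*}=\bar Y_0^{*}\bar Y_0=1$ in $\bar{\mathcal A}_0$. Consequently the assignment sending the generator of $\Z$ to $\bar Y_0$ extends to a well-defined algebra homomorphism $\psi\colon\C[\Z]\to\bar{\mathcal A}_0$.

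For surjectivity I argue by induction on $r+s$ over PBW monomials $X=Y_{m_1}\cdots Y_{m_r}Y_{n_1}^{*}\cdots Y_{n_s}^{*}$ in $\mathcal A_0$. If some left partial product of $X$ has strictly positive degree $k\ge 1$, then $X\in\mathcal A_k\mathcal A_{-k}\subset I_0$ and $\bar X=0$. If no left partial degree is positive, the monotonicity conditions in Proposition~\ref{pbwbasis2} force $m_i\ge 0$ and $n_j\le 0$ for all $i,j$; and if all those indices vanish then $X=Y_0^{r}(Y_0^{*})^{s}\in\operatorname{Im}\psi$. Otherwise $m_r\ge 1$ and $s\ge 1$, and (\ref{induction-step}) rewrites
\[
Y_{m_r}Y_{n_1}^{*}\cdots Y_{n_s}^{*}=(-1)^{s}Y_{n_1-1}^{*}\cdots Y_{n_s-1}^{*}Y_{m_r+s}+(\text{correction terms, each of $Y^{*}$-length $s-1$ and $Y$-length $0$}).
\]
Left-multiplying by $Y_{m_1}\cdots Y_{m_{r-1}}$, the leading term becomes $(-1)^{s}\bigl(Y_{m_1}\cdots Y_{m_{r-1}}Y_{n_1-1}^{*}\cdots Y_{n_s-1}^{*}\bigr)Y_{m_r+s}$, whose first factor has degree $m_r+s\ge 2$, so the whole leading term lies in $\mathcal A_{m_r+s}\mathcal A_{-(m_r+s)}\subset I_0$. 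Each correction term yields a PBW monomial of total length $r+s-2$ (the inequality $n_j-1\le n_{j+2}$ preserves the required monotonicity), so the induction hypothesis finishes the step.

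For injectivity, for each $\mu\in\C^\times$ the assignments $Y_n\mapsto\mu Y_n$ and $Y_n^{*}\mapsto\mu^{-1}Y_n^{*}$ preserve relations (\ref{eq:3.1})--(\ref{eq:3.3}) (the $\mu$-scalars cancel in each one) and thus define an algebra automorphism $\phi_\mu$ of $\mathcal A$. Twisting the Jing--Nie module $M(1)$ by $\phi_\mu$ produces an $\N$-graded $\mathcal A$-module on whose one-dimensional degree-zero subspace $\bar Y_0$ acts as the scalar $\mu$, so any $p=\sum_{k\in\Z}c_kx^k\in\ker\psi$ gives $\sum_k c_k\mu^k=0$ for every $\mu\in\C^\times$, forcing $p=0$. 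The main technical obstacle is the induction in the surjectivity step: one must verify that the $(r+s-2)$-length correction terms produced by (\ref{induction-step}) remain PBW-ordered in the sense of Proposition~\ref{pbwbasis2}, so that the induction hypothesis genuinely applies to them.
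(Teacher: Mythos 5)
Your proposal is correct and follows essentially the same route as the paper's proof: verify the unit relation via (\ref{eq:3.3}), prove surjectivity by induction on $r+s$ over PBW monomials using (\ref{induction-step}), and prove injectivity by twisting $M(1)$ by the scalar automorphisms $\tau_\mu$. Your "left partial degree" reformulation of the case split is a mild repackaging of the paper's case analysis on the sign of $n_s$, and the rest is the same argument.
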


\begin{proof} Set
$$\bar{Y}=Y_{0}+I_{0},\   \  \bar{Y}^{*}=Y^{*}_{0}+I_{0}\in \bar{\mathcal{A}}_{0}.$$
From the third defining relation of $\mathcal{A}$ we have
$$Y_{0}Y^{*}_{0}+Y^{*}_{-1}Y_{1}=1,\   \   \   \   Y_{-1}Y^{*}_{1}+Y^{*}_{0}Y_{0}=1.$$
It follows that $\bar{Y}\bar{Y}^{*}=\bar{Y}^{*}\bar{Y}=1$ in $\bar{\mathcal{A}}_{0}$.
Next, we prove that $\bar{\mathcal{A}}_{0}$ is generated by $\bar{Y}$ and $\bar{Y}^{*}$, which amounts to
\begin{eqnarray}\label{eA0}
\mathcal{A}_{0}=\langle Y_{0},Y^{*}_{0}\rangle+I_{0},
\end{eqnarray}
where $\langle Y_{0},Y^{*}_{0}\rangle$ denotes the subalgebra generated by $Y_{0}$ and $Y^{*}_{0}$.
Note that in view of Proposition \ref{pbwbasis2}, $\mathcal{A}_{0}$ is linearly spanned by the vectors of the form
$$X=Y_{m_1}Y_{m_2}\cdots Y_{m_r}Y^{*}_{n_1}Y^{*}_{n_2}\cdots Y^{*}_{n_s},$$
where $r,s\ge 0$, $m_i,n_i\in \Z$ with $m_1\le m_2\le \cdots \le m_r$, $n_1\le n_2\le \cdots \le n_s$, and
$$m_1+m_2+\cdots +m_{r}+n_1+n_2+\cdots +n_s=0.$$
We first consider the case with $r\ge 1,\  s=0$: $X=Y_{m_1}Y_{m_2}\cdots Y_{m_r}$.
 Since $m_1\le m_2\le \cdots \le m_r$ and $m_1+m_2+\cdots +m_r=0$, we have $m_r\ge 0$. If $m_r=0$, we have
$m_1=m_2=\cdots =m_r=0$, so that $X=Y_{0}^{r}\in \langle Y_{0},Y^{*}_{0}\rangle$. If $m_r\ge 1$, we have
$$X=(Y_{m_1}\cdots Y_{m_{r-1}})Y_{m_r}\in I_0.$$
Analogously, if $r=0,\ s\ge 1$ we have $X\in \langle Y_{0},Y^{*}_{0}\rangle$.

Now consider the case with $r,s\ge 1$. We shall use induction on $r+s$.
For $r+s=2$, we have $X=Y_{m}Y^{*}_{-m}$ with $m\in \Z$. If $m<0$, we have $X\in I_0$ by definition, and if $m\ge 0$, we have
$$Y_{m}Y^{*}_{-m}=-Y^{*}_{-m-1}Y_{m+1}+1\equiv 1\   \  \mod I_{0}.$$
Assume that $r+s\ge 2$. If $n_s\ge 1$, we have
$$X=\left(Y_{m_1}Y_{m_2}\cdots Y_{m_r}Y^{*}_{n_1}Y^{*}_{n_2}\cdots Y^{*}_{n_{s-1}}\right)Y^{*}_{n_s}\in I_0.$$
If $n_s=0$, it follows from the induction hypothesis that
$$Y_{m_1}Y_{m_2}\cdots Y_{m_r}Y^{*}_{n_1}Y^{*}_{n_2}\cdots Y^{*}_{n_{s-1}}\in \langle Y_{0},Y^{*}_{0}\rangle+I_{0},$$
so that
$$X=(Y_{m_1}Y_{m_2}\cdots Y_{m_r}Y^{*}_{n_1}Y^{*}_{n_2}\cdots Y^{*}_{n_{s-1}})Y_{0}
\in \langle Y_{0},Y^{*}_{0}\rangle+I_{0}.$$
Now assume $n_s<0$.  Then $n_1,\dots,n_s<0$. As $m_1+\cdots +m_r+n_1+\cdots +n_s=0$, we must have $m_r\ge 1$,
recalling that $m_1\le m_2\le \cdots \le m_r$.
We have (see (\ref{induction-step}))
\begin{eqnarray*}
&&Y_{m_r}Y^{*}_{n_1}Y^{*}_{n_2}\cdots Y^{*}_{n_s}\\
&=&(-1)^{s}Y^{*}_{n_1-1}Y^{*}_{n_2-1}\cdots Y^{*}_{n_s-1}Y_{m_r+s}\\
&&+\delta_{m_r+n_1,0}Y^{*}_{n_2}\cdots Y^{*}_{n_s}
+(-1)^{s-1}\delta_{m_r+s-1+n_s,0}Y^{*}_{n_1-1}Y^{*}_{n_2-1}\cdots Y^{*}_{n_{s-1}}\\
&&+\sum_{j=1}^{s-1}(-1)^{j}\delta_{m_r+j+n_{j},0}Y^{*}_{n_1-1}\cdots Y^{*}_{n_{j}-1}Y^{*}_{n_{j+2}}\cdots Y^{*}_{n_{s}}.
\end{eqnarray*}
Notice that
$$Y_{m_1}\cdots Y_{m_{r-1}}\cdot Y^{*}_{n_1-1}Y^{*}_{n_2-1}\cdots Y^{*}_{n_s-1}Y_{m_r+s}\in I_{0}$$
as $m_r+s\ge 1$. It then follows from the induction hypothesis that $X\in \langle Y_{0},Y^{*}_{0}\rangle+I_{0}.$
Therefore, $\bar{\mathcal{A}}_{0}$ is a commutative algebra generated by $\bar{Y}$ and $\bar{Y}^{*}$,
satisfying relation $\bar{Y}\bar{Y}^{*}=\bar{Y}^{*}\bar{Y}=1$.
Consequently, $\bar{\mathcal{A}}_{0}$ is a quotient algebra of the group algebra $\C[\Z]$.

Next, we show that $\bar{\mathcal{A}}_{0}$ is isomorphic to  $\C[\Z]$ by constructing enough irreducible modules.
 From Jing-Nie's realization of $\mathcal{A}$ on $M(1)$, we have $Y_{n}1=0=Y^{*}_{n}1$ for $n\ge 1$ and
$Y_{0}1=1=Y^{*}_{0}1$. By Lemma \ref{omega}, we have $\mathcal{A}_{-}\cdot 1=0$, which also implies $I_{0}\cdot 1=0$.
It then follows from (\ref{eA0}) that $\C 1$ is an irreducible $\bar{\mathcal{A}}_{0}$-module on which $\bar{Y}_{0}$  and $\bar{Y}^{*}_{0}$ act as scalar $1$. Let $\mu$ be  any nonzero complex number.
It can be readily seen that  $\mathcal{A}$ admits an automorphism $\tau_{\mu}$  such that
\begin{eqnarray}\label{edef-tau}
\tau_{\mu}(Y_{n})=\mu Y_{n} \   \mbox{ and }\   \tau_{\mu}(Y^{*}_{n})=\mu^{-1}Y^{*}_{n}\   \   \   \mbox{ for }n\in \Z.
\end{eqnarray}
Then it follows immediately that $\bar{\mathcal{A}}_{0}$ admits a $1$-dimensional module $\C_{\mu}$
on which $\bar{Y}_{0}$  and $\bar{Y}^{*}_{0}$ act as scalars $\mu$  and  $\mu^{-1}$, respectively.
 This implies that $1,\  \bar{Y}^{n},\ (\bar{Y}^{*})^{n}$ for $n\ge 1$ are linearly independent.
Consequently, we have  $\bar{\mathcal{A}}_{0}\simeq \C[\Z]$.
\end{proof}

For any  nonzero
complex number $\mu$, we have a $1$-dimensional irreducible $\bar{\mathcal{A}}_{0}$-module
$\C_{\mu}$ on which $Y_{0}$ acts as scalar $\mu$ and $Y^{*}_{0}$ acts as scalar $\mu^{-1}$.
On the other hand, since $\bar{\mathcal{A}}_{0}$ is a commutative algebra of countable dimension over $\C$,
all irreducible $\bar{\mathcal{A}}_{0}$-modules are $1$-dimensional.
Therefore, $\C_{\mu}$ for $\mu\in \C^{\times}$ exhaust the irreducible $\bar{\mathcal{A}}_{0}$-modules.
For $\mu\in \C$,
let $M_{\mathcal{A}}(\mu)$ denote the (generalized) Verma $\mathcal{A}$-module $M_{\mathcal{A}}(U)$ with $U=\C_{\mu}$ (recall (\ref{Verma-module})). Furthermore,
let $L_{\mathcal{A}}(\mu)$ be the quotient module of $M_{\mathcal{A}}(\mu)$ by
the maximal  $\N$-graded $\mathcal{A}$-submodule.

To summarize we have:

\begin{thm}\label{classification-A}
$\mathcal{A}$-modules $L_{\mathcal{A}}(\mu)$ for $\mu\in \C^{\times}$ form a complete list of
irreducible $\N$-graded $\mathcal{A}$-modules up to isomorphism.
\end{thm}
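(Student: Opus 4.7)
The plan is to reduce the classification to that of irreducible $\bar{\mathcal{A}}_{0}$-modules, which has essentially been carried out in the preceding proposition, and then to identify the resulting $\mathcal{A}$-modules with the $L_{\mathcal{A}}(\mu)$.

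First, I would invoke the correspondence set up in Remark \ref{zhualgebra}. Given any irreducible $\N$-graded $\mathcal{A}$-module $W=\oplus_{n\ge 0}W(n)$ with $W(0)\ne 0$, the degree-zero subspace $W(0)$ is acted on by $\mathcal{A}_0$ with the ideal $I_0$ acting trivially (since $\mathcal{A}_n W(0)\subset W(-n)=0$ for $n\ge 1$), so $W(0)$ carries a $\bar{\mathcal{A}}_0$-module structure. Graded irreducibility of $W$ forces $W(0)$ to be an irreducible $\bar{\mathcal{A}}_0$-module, for any nonzero $\bar{\mathcal{A}}_0$-submodule would generate a proper nonzero graded $\mathcal{A}$-submodule of $W$.

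Next I would apply the previous proposition: $\bar{\mathcal{A}}_{0}\cong\C[\Z]$ is commutative of countable dimension over $\C$, so by a standard Schur-lemma argument every irreducible $\bar{\mathcal{A}}_0$-module is one-dimensional, and its isomorphism class is determined by the scalar $\mu\in\C^{\times}$ by which $\bar{Y}_0$ acts (with $\bar{Y}^{*}_0$ acting as $\mu^{-1}$). Thus the $\bar{\mathcal{A}}_0$-modules $\C_{\mu}$ for $\mu\in\C^{\times}$ are, up to isomorphism, all the irreducible $\bar{\mathcal{A}}_0$-modules.

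Finally I would verify that $W\cong L_{\mathcal{A}}(\mu)$ where $\mu$ is determined by $W(0)\cong\C_{\mu}$. By the universal property of the induced module $M_{\mathcal{A}}(\C_\mu)$ built in (\ref{Verma-module}), any $\bar{\mathcal{A}}_0$-module map $\C_\mu\to W(0)$ lifts to a graded $\mathcal{A}$-module map $M_{\mathcal{A}}(\mu)\to W$ (here one uses that $\mathcal{A}_{-}$ annihilates $W(0)$, which follows from Lemma \ref{omega} applied to vectors in $W(0)$); this map is surjective by irreducibility of $W$ and respects the $\N$-grading with degree-zero component the given isomorphism $\C_\mu\to W(0)$. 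The kernel is a graded submodule with zero degree-zero component and is therefore contained in the unique maximal graded submodule of $M_{\mathcal{A}}(\mu)$, giving a surjection $L_{\mathcal{A}}(\mu)\to W$, which must be an isomorphism since $L_{\mathcal{A}}(\mu)$ is itself irreducible graded. Conversely each $L_{\mathcal{A}}(\mu)$ is irreducible $\N$-graded by construction, and the assignment $\mu\mapsto L_{\mathcal{A}}(\mu)$ is injective on equivalence classes since $L_{\mathcal{A}}(\mu)(0)\cong\C_\mu$ recovers $\mu$. The only real point to check carefully is that $L_{\mathcal{A}}(\mu)$ exists and is nonzero in degree $0$, i.e., that the unique maximal $\N$-graded submodule of $M_{\mathcal{A}}(\mu)$ does not meet the degree-zero component — this follows from the fact that $\C_\mu$ is irreducible over $\bar{\mathcal{A}}_0$, so the sum of all graded submodules with zero degree-zero piece is itself such a submodule.
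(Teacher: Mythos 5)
Your proof is correct and follows essentially the same route as the paper: you invoke the correspondence of Remark~\ref{zhualgebra} between irreducible $\N$-graded $\mathcal{A}$-modules and irreducible $\bar{\mathcal{A}}_0$-modules, use the identification $\bar{\mathcal{A}}_0\cong\C[\Z]$ together with the countable-dimension Schur argument to see that the $\C_\mu$ exhaust the irreducibles, and then identify the corresponding graded $\mathcal{A}$-modules with $L_{\mathcal{A}}(\mu)$ via the universal property of the induced module. The paper presents the theorem as a summary of exactly these preceding steps; you have merely spelled out the lifting argument and the nondegeneracy of the degree-zero piece in slightly more detail (incidentally, the appeal to Lemma~\ref{omega} is harmless but unnecessary, since $\mathcal{A}_{-}W(0)\subset\bigoplus_{n<0}W(n)=0$ is immediate from the $\N$-grading).
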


For the rest of this section, we prove that $M(1)$ is an irreducible $\N$-graded $\mathcal{A}$-module and there is a non-degenerate
symmetric bilinear form that is invariant in a certain sense. First of all,
it is straightforward to show that algebra $\mathcal{A}$ admits an involution (an order $2$ anti-automorphism) $\theta$ which is uniquely determined by
\begin{eqnarray}
\theta(Y_{n})=Y^{*}_{-n},\   \   \   \  \theta(Y^{*}_{n})=Y_{-n}\   \    \    \mbox{ for }n\in \Z.
\end{eqnarray}
For any (left) $\mathcal{A}$-module $W$,  we equip $W^{*}$ with a (left) $\mathcal{A}$-module structure given by
\begin{eqnarray}
(af)(w)=f(\theta(a)w)\    \    \   \mbox{ for }a\in \mathcal{A},\  f\in W^{*},\  w\in W.
\end{eqnarray}
A bilinear form $\<\cdot,\cdot\>$ on an $\mathcal{A}$-module $W$ is said to be {\em invariant} if
\begin{eqnarray}
 \<au,v\>=\<u,\theta(a)v\>\    \    \    \mbox{ for }a\in \mathcal{A},\ u,v\in W.
\end{eqnarray}

Recall that  $M_{\mathcal{A}}(1)$ is an $\N$-graded $\mathcal{A}$-module with $1$-dimensional degree-zero subspace $\C$.
Denote by $v_1$ the generator $1$ (an element of $\C$) of the $\mathcal{A}$-module $M_{\mathcal{A}}(1)$.
Just as with Verma modules for finite-dimensional simple Lie algebras, one obtains a symmetric invariant bilinear form $\<\cdot,\cdot\>$ on $M_{\mathcal{A}}(1)$ such that $\<v_{1},v_{1}\>=1$.
We have $\< M_{\mathcal{A}}(1)_{m},M_{\mathcal{A}}(1)_{n}\>=0$ for $m,n\in \N$ with $m\ne n$, where $ M_{\mathcal{A}}(1)_{m}$ denotes
the homogeneous subspace of degree $m$.
It then follows that any proper graded submodule is contained in the kernel of $\<\cdot,\cdot\>$,
so that $\<\cdot,\cdot\>$ is reduced  to a symmetric bilinear form on the quotient module. This gives rise to a non-degenerate
symmetric invariant bilinear form on the irreducible $\N$-graded module $L_{\mathcal{A}}(1)$.

Furthermore, we have the following results on $\mathcal{A}$-modules $L_{\mathcal{A}}(1)$ and $M(1)$:

\begin{thm}\label{tirreducible}
As $\mathcal{A}$-modules, we have
$L_{\mathcal{A}}(1)=M_{\mathcal{A}}(1)/J\simeq M(1)$, where $J$ is the $\mathcal{A}$-submodule of $M_{\mathcal{A}}(1)$,
generated by vectors
\begin{eqnarray}
Y_{-\lambda}v_1-(-1)^{|\lambda|}Y^{*}_{-\lambda'}v_1
\end{eqnarray}
for $\lambda\in \mathcal{P}$, where $\lambda'$ is the dual of $\lambda$. Furthermore,
 there is a non-degenerate symmetric invariant bilinear form $\<\cdot,\cdot\>$ on $M(1)$
with $\{Y_{-\lambda}1\ | \lambda\in \mathcal{P}\}$ as an orthonormal basis.
\end{thm}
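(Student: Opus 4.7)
The plan is to establish $M_\mathcal{A}(1)/J \simeq M(1)$ first and then identify this quotient with $L_\mathcal{A}(1)$ by exploiting the invariant bilinear form already constructed on $M_\mathcal{A}(1)$; the orthonormal basis on $M(1)$ will drop out of the same computation.

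First, from \eqref{esingularM1} and Lemma \ref{omega} the subspace $\C 1 \subset M(1)$ is an $(\mathcal{A}_- + \mathcal{A}_0)$-submodule isomorphic to $\C_1$, so by the universal property of the induced module there is a surjective graded $\mathcal{A}$-module map $\pi : M_\mathcal{A}(1) \to M(1)$ with $\pi(v_1) = 1$, and \eqref{eY=Y*} forces $J \subseteq \ker \pi$. To show that the induced map $\bar{\pi} : M_\mathcal{A}(1)/J \to M(1)$ is an isomorphism, I would prove that the classes $\{Y_{-\lambda} v_1 + J : \lambda \in \mathcal{P}\}$ span $M_\mathcal{A}(1)/J$: using the PBW basis of Proposition \ref{pbwbasis2}, any element of $M_\mathcal{A}(1)$ is a combination of $Y_{m_1}\cdots Y_{m_r} Y^*_{n_1}\cdots Y^*_{n_s} v_1$ with non-decreasing indices; killing terms with $n_s \ge 1$ (Lemma \ref{omega}) and absorbing trailing $Y^*_0$'s via $Y^*_0 v_1 = v_1$, the $Y^*$-block becomes $Y^*_{-\mu} v_1$ for some partition $\mu$, which is congruent to $(-1)^{|\mu|} Y_{-\mu'} v_1$ modulo $J$; the remaining all-$Y$ product applied to $v_1$ then rearranges under $Y_m Y_n = -Y_{n+1} Y_{m-1}$ into non-decreasing PBW form and, after absorbing $Y_0$'s via $Y_0 v_1 = v_1$, becomes $\pm Y_{-\lambda} v_1$ for a single partition $\lambda$. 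Since $\bar{\pi}$ sends these spanning classes onto the basis $\{Y_{-\lambda}\cdot 1\}$ of $M(1)$, it is a bijection.

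Next, the symmetric invariant form $\<\cdot,\cdot\>$ on $M_\mathcal{A}(1)$ with $\<v_1,v_1\>=1$ built in the paragraph preceding the theorem satisfies, by invariance,
\[
\<Y_{-\lambda} v_1, Y_{-\mu} v_1\> = \<v_1, Y^*_{\lambda_r}\cdots Y^*_{\lambda_1} Y_{-\mu_1}\cdots Y_{-\mu_s} v_1\>.
\]
I will evaluate this by a Wick-type argument: iteratively apply \eqref{eq:3.3} to move each $Y^*$ to the right, where it annihilates $v_1$ for positive indices and acts as the identity for index $0$, tracking the contraction terms $\delta_{m+n,0}$; the combinatorics collapse to $\delta_{\lambda,\mu}$. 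Consequently $\{Y_{-\lambda} v_1 + J\}$ is an orthonormal basis of $M_\mathcal{A}(1)/J$ under the descended form, which is therefore non-degenerate. Since the kernel of $\<\cdot,\cdot\>$ on $M_\mathcal{A}(1)$ equals the maximal proper graded submodule, this forces $J$ to be that maximal submodule, so $M_\mathcal{A}(1)/J = L_\mathcal{A}(1)$; transporting the form across $\bar{\pi}$ yields the asserted orthonormal basis on $M(1)$.

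The main obstacle is the Wick-type computation of $\<Y_{-\lambda} v_1, Y_{-\mu} v_1\>$: the signs from the Clifford-like anticommutations and the vanishing for $\lambda \ne \mu$ require careful bookkeeping. The cleanest route goes through the Clifford-algebra embedding $\pi : \mathcal{A} \to \mathcal{C}\sharp \C[\langle\sigma\rangle]$ of Remark \ref{rmk:3.1}, which transfers the calculation to a standard Wick contraction in the Clifford Fock space.
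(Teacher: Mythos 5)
Your plan coincides in structure with the paper's own proof (construct the surjection $\pi$, show $J\subseteq\ker\pi$, show $\{Y_{-\lambda}\bar v_1\}$ spans the quotient, establish orthonormality, and deduce both the isomorphism with $M(1)$ and the identification with $L_{\mathcal{A}}(1)$), but it leaves the decisive step unproved. The claim $\<Y_{-\lambda}\bar v_1, Y_{-\mu}\bar v_1\>=\delta_{\lambda,\mu}$ is the heart of the theorem, and you explicitly flag it as ``the main obstacle'' without resolving it; the phrase ``the combinatorics collapse to $\delta_{\lambda,\mu}$'' is not an argument. What actually makes it work is an induction comparing the largest parts: if $\lambda_1>\mu_1$, pushing $Y^*_{\lambda_1}$ to the right past $Y_{-\mu_1},\dots,Y_{-\mu_s}$ via $Y^*_k Y_j=\delta_{j+k,0}-Y_{j-1}Y^*_{k+1}$ produces contraction coefficients $\delta_{\lambda_1+i,\,\mu_{i+1}}$, each of which vanishes because $\mu_{i+1}\le\mu_1<\lambda_1\le\lambda_1+i$, and the leftover term $Y_{-\mu_1-1}\cdots Y_{-\mu_s-1}Y^*_{\lambda_1+s}\bar v_1$ is zero; if $\lambda_1=\mu_1$, exactly the first contraction survives and one reduces to $\<Y_{-\lambda_2}\cdots \bar v_1,\,Y_{-\mu_2}\cdots \bar v_1\>$. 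Until this (or an equivalent) is carried out, the proof does not close.

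The proposed fallback of transferring the computation through $\pi:\mathcal{A}\to\mathcal{C}\sharp\C[\langle\sigma\rangle]$ to a ``standard Wick contraction in the Clifford Fock space'' is likewise unsubstantiated, and there is a concrete obstruction: the automorphism $\sigma$ (with $\sigma(a_n)=a_{n+1}$, $\sigma(b_n)=b_{n-1}$) does not preserve the annihilation ideal $J_+$ generated by $\{a_n,b_n:n\ge 0\}$, since $\sigma(b_0)=b_{-1}\notin J_+$ while $a_0\notin\sigma(J_+)$. Hence $\sigma$ does not descend to an operator on $V_{\mathcal{C}}=\mathcal{C}/J_+$, and one cannot simply restrict the standard Fock module along $\pi$ to obtain the $\mathcal{A}$-module in question. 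Making this route rigorous would require constructing an appropriate $\mathcal{C}\sharp\C[\langle\sigma\rangle]$-module and matching it with $M(1)$, which is not evidently simpler than the direct induction the paper performs.
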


\begin{proof} First of all, by (\ref{esingularM1}) there exists an $\mathcal{A}$-module homomorphism $\pi$ from $M_{\mathcal{A}}(1)$ onto $M(1)$ with $\pi (v_1)=1$.
 Recall that for $\lambda=(\lambda_1,\lambda_2,\dots,\lambda_r)\in \mathcal{P}$,  the weight of $\lambda$ is defined to be
$|\lambda|=\lambda_1+\lambda_2+\cdots +\lambda_r$. Then $Y_{-\lambda}v_1$ is of degree $|\lambda|$.
A simple fact is that $\lambda$ and its dual $\lambda'$ have the same weight. It then follows that  $J$ is a graded submodule.
 Set
$$\overline{M}_{\mathcal{A}}(1)=M_{\mathcal{A}}(1)/J,$$
which is an $\N$-graded $\mathcal{A}$-module with $1$-dimensional
degree-zero subspace.  Denote by $\bar{v}_{1}$  the image of $v_1$ in $\overline{M}_{\mathcal{A}}(1)$.
 In view of relation (\ref{eY=Y*}), we have $J\subset \ker \pi$, so that the $\mathcal{A}$-module homomorphism $\pi$ reduces to
a homomorphism from $\overline{M}_{\mathcal{A}}(1)$ onto $M(1)$, denoted by $\bar{\pi}$.
We next show that $\bar{\pi}$ is actually an isomorphism.

It follows from the definition of $J$ and the P-B-W basis (Proposition \ref{pbwbasis2} ) that
$\overline{M}_{\mathcal{A}}(1)$ is linearly spanned by vectors
\begin{eqnarray}
Y_{-\lambda}\bar{v}_1\  \  (\lambda\in \mathcal{P}).
\end{eqnarray}
We claim
\begin{eqnarray}\label{orthogonal}
\< Y_{-\lambda}\bar{v}_1,Y_{-\mu}\bar{v}_1\>=\delta_{\lambda,\mu}\   \   \    \mbox{ for }\lambda,\mu\in \mathcal{P}.
\end{eqnarray}
Let
$$\lambda=(\lambda_1,\lambda_2,\dots,\lambda_r), \  \  \mu=(\mu_1,\mu_2,\dots,\mu_s)\in \mathcal{P},$$
where $\lambda_i,\ \mu_j$ are positive integers such that
$$\lambda_1\ge \lambda_2\ge \cdots \ge \lambda_r\   \mbox{ and }\ \mu_1\ge \mu_2\ge \cdots \ge \mu_s.$$
Suppose $\lambda_1\ne \mu_1$. By symmetry, we may assume $\lambda_1>\mu_1$. Then we have
\begin{eqnarray*}
\<Y_{-\lambda}\bar{v}_1,Y_{-\mu}\bar{v}_1\>
&=&\<Y_{-\lambda_1}Y_{-\lambda_2}\cdots Y_{-\lambda_r}\bar{v}_1,Y_{-\mu_1}Y_{-\mu_2}\cdots Y_{-\mu_r}\bar{v}_1\>\\
&=&\<Y_{-\lambda_2}\cdots Y_{-\lambda_r}\bar{v}_1,Y^{*}_{\lambda_1}Y_{-\mu_1}Y_{-\mu_2}\cdots Y_{-\mu_r}\bar{v}_1\>\\
&=&0,
\end{eqnarray*}
noticing that as $\lambda_1+i>\mu_{i+1}$ for $i=0,\dots,s-1$,
\begin{eqnarray*}
&&Y^{*}_{\lambda_1}Y_{-\mu_1}Y_{-\mu_2}\cdots Y_{-\mu_r}\bar{v}_1
=-Y_{-\mu_1-1}Y^{*}_{\lambda_1+1}Y_{-\mu_2}\cdots Y_{-\mu_r}\bar{v}_1=\cdots\\
&&\  \  \  \  =(-1)^{s}Y_{-\mu_1-1}Y_{-\mu_2-1}\cdots Y_{-\mu_r-1}Y^{*}_{\lambda_1+s}\bar{v}_1=0.
\end{eqnarray*}
On the other hand, if $\lambda_1=\mu_1$, we have
\begin{eqnarray*}
&&\<Y_{-\lambda}\bar{v}_1,Y_{-\mu}\bar{v}_1\>\\
&=&\<Y_{-\lambda_2}\cdots Y_{-\lambda_r}\bar{v}_1,Y^{*}_{\lambda_1}Y_{-\mu_1}Y_{-\mu_2}\cdots Y_{-\mu_r}\bar{v}_1\>\\
&=&-\<Y_{-\lambda_2}\cdots Y_{-\lambda_r}\bar{v}_1,Y_{-\mu_1-1}Y^{*}_{\lambda_1+1}Y_{-\mu_2}\cdots Y_{-\mu_r}\bar{v}_1\>
+\<Y_{-\lambda_2}\cdots Y_{-\lambda_r}\bar{v}_1,Y_{-\mu_2}\cdots Y_{-\mu_r}\bar{v}_1\>\\
&=&\<Y_{-\lambda_2}\cdots Y_{-\lambda_r}\bar{v}_1,Y_{-\mu_2}\cdots Y_{-\mu_r}\bar{v}_1\>,
\end{eqnarray*}
noticing that $Y^{*}_{\lambda_1+1}Y_{-\mu_2}\cdots Y_{-\mu_r}\bar{v}_1=0$ as before.
Then (\ref{orthogonal}) follows from induction. Now, with (\ref{orthogonal}) we conclude that
$Y_{-\lambda}\bar{v}_1$ for $\lambda\in \mathcal{P}$
form a basis of $\overline{M}_{\mathcal{A}}(1)$, which is orthonormal with respect to the bilinear form $\<\cdot,\cdot\>$.
Thus $\overline{M}_{\mathcal{A}}(1)$ must be an irreducible $\N$-graded module
as any proper graded submodule is contained in the kernel of the bilinear form.  Consequently,  the homomorphism $\bar{\pi}$
from $\overline{M}_{\mathcal{A}}(1)$ onto $M(1)$ is an isomorphism, which implies that $M(1)$ is an  irreducible $\N$-graded
$\mathcal{A}$-module. Now, the proof is complete.
\end{proof}

Let  $\mu$ be a nonzero complex number. Recall from (\ref{edef-tau}) the automorphism $\tau_{\mu}$ of $\mathcal{A}$.
Denote by $\rho$ the representation of $\mathcal{A}$ on $M(1)$. As $L_{\mathcal{A}}(1)\simeq M(1)$,
it follows that $L_{\mathcal{A}}(\mu)\simeq (M(1),\rho\circ \tau_{\mu})$.
Then we immediately have:

\begin{cor}\label{cqdimention}
For any nonzero complex number $\mu$, define
$${\rm gdim}_{q}L_{\mathcal{A}}(\mu)=\sum_{n\in \N}\dim (L_{\mathcal{A}}(\mu)_{n})q^{n}.$$
Then
\begin{eqnarray}
{\rm gdim}_{q}L_{\mathcal{A}}(\mu)=\sum_{n\in \N}|\mathcal{P}_{n}|q^{n},
\end{eqnarray}
where $\mathcal{P}_{n}$ denotes the set of partitions $\lambda$ of weight $n$.
\end{cor}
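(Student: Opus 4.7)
The plan is to reduce the computation for arbitrary $\mu\in\C^{\times}$ to the case $\mu=1$ via the automorphism $\tau_{\mu}$, and then extract the graded dimension from the orthonormal basis produced in Theorem \ref{tirreducible}.

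First I would observe that the automorphism $\tau_{\mu}$ of $\mathcal{A}$ defined in (\ref{edef-tau}) preserves the $\Z$-grading, because it only rescales each generator $Y_{n},Y^{*}_{n}$ by a nonzero scalar and hence fixes $\deg Y_{n}=-n=\deg Y^{*}_{n}$. Consequently the isomorphism $L_{\mathcal{A}}(\mu)\simeq (M(1),\rho\circ\tau_{\mu})$ noted just before the corollary is an isomorphism of $\N$-graded $\mathcal{A}$-modules. In particular
$$\dim L_{\mathcal{A}}(\mu)_{n}=\dim L_{\mathcal{A}}(1)_{n}=\dim M(1)_{n}$$
for every $n\in\N$, so it suffices to compute $\mathrm{gdim}_{q}M(1)$.

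Next I would invoke Theorem \ref{tirreducible}, which provides the orthonormal basis $\{Y_{-\lambda}\bar{v}_{1}\mid \lambda\in\mathcal{P}\}$ of $\overline{M}_{\mathcal{A}}(1)\simeq M(1)$. By the derivation property $d(Y_{n})=-nY_{n}$, each basis vector $Y_{-\lambda}\bar{v}_{1}=Y_{-\lambda_{1}}\cdots Y_{-\lambda_{r}}\bar{v}_{1}$ is homogeneous of degree $\lambda_{1}+\cdots+\lambda_{r}=|\lambda|$, since $\bar{v}_{1}$ lies in the degree-zero subspace. Hence the basis elements of degree $n$ are in bijection with $\mathcal{P}_{n}$, giving $\dim M(1)_{n}=|\mathcal{P}_{n}|$.

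Combining the two steps yields
$$\mathrm{gdim}_{q}L_{\mathcal{A}}(\mu)=\sum_{n\in\N}\dim L_{\mathcal{A}}(\mu)_{n}\,q^{n}=\sum_{n\in\N}|\mathcal{P}_{n}|\,q^{n},$$
as claimed. There is no real obstacle here: the entire content of the corollary has been packaged into Theorem \ref{tirreducible} and the definition of $\tau_{\mu}$; the only point worth checking carefully is that $\tau_{\mu}$ is grading-preserving, which is immediate from (\ref{edef-tau}).
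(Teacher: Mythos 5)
Your proposal is correct and matches the paper's (implicit) argument: the paper states the corollary as an immediate consequence of the isomorphism $L_{\mathcal{A}}(\mu)\simeq(M(1),\rho\circ\tau_{\mu})$ and the orthonormal basis $\{Y_{-\lambda}\bar{v}_1\}$ from Theorem \ref{tirreducible}, which is exactly what you have spelled out, including the key observation that $\tau_{\mu}$ preserves the grading.
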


\section{Associative algebra $\tilde{\mathcal{A}}$ and quantum vertex algebras}

In this section, we introduce a new associative algebra $\tilde{\mathcal{A}}$ and study its vacuum modules.
To $\tilde{\mathcal{A}}$, we associate a quantum vertex algebra $V_{\tilde{\mathcal{A}}}$,
which is proved to be an irreducible quantum vertex algebra.
Furthermore, we establish a canonical one-to-one correspondence between  restricted $\mathcal{A}$-modules
and $\phi$-coordinated $V_{\tilde{\mathcal{A}}}$-modules.

\begin{definition}
{\em Let $\tilde{\mathcal{A}}$ denote the associative algebra with identity over $\mathbb{C}$ generated by
$\tilde{Y}_n,\  \tilde{Y}_{n}^{\ast}$ $(n\in \mathbb{Z})$, subject to relations
\begin{eqnarray}
&&\tilde{Y}(z)\tilde{Y}(w)+e^{w-z}\tilde{Y}(w)\tilde{Y}(z)=0, \label{eq:3.7}\\
&&\tilde{Y}^{\ast}(z)\tilde{Y}^{\ast}(w)+e^{w-z}\tilde{Y}^{\ast}(w)\tilde{Y}^{\ast}(z)=0, \label{eq:3.8}\\
&&\tilde{Y}(z)\tilde{Y}^{\ast}(w)+e^{z-w}\tilde{Y}^{\ast}(w)\tilde{Y}(z)=w^{-1}\delta\left(\frac{z}{w}\right), \label{eq:3.9}
\end{eqnarray}
where
\begin{eqnarray}
 \tilde{Y}(z)=\sum\limits_{n\in \mathbb{Z}}\tilde{Y}_{n}z^{-n-1},\  \  \  \
  \tilde{Y}^{\ast}(z)=\sum\limits_{n\in \mathbb{Z}}{\tilde{Y}^{\ast}}_{n}z^{-n-1}.
\end{eqnarray}}
\end{definition}

\begin{rmk}\label{def:A}
{\em Note that in terms of components,  relations (\ref{eq:3.7})-(\ref{eq:3.9}) amount to
 \begin{eqnarray}
 &&\tilde{Y}_{m}\tilde{Y}_{n}+\sum\limits_{k,i\geq0}\frac{1}{k!}\binom{k}{i}(-1)^{i}\tilde{Y}_{n+k-i}\tilde{Y}_{m+i}=0, \label{eq:3.10}\\
&&\tilde{Y}^{\ast}_{m}\tilde{Y}^{\ast}_{n}+\sum\limits_{k,i\geq0}\frac{1}{k!}\binom{k}{i}(-1)^{i}\tilde{Y}^{\ast}_{n+k-i}\tilde{Y}^{\ast}_{m+i}=0, \label{eq:3.11}\\
&&\tilde{Y}_{m}\tilde{Y}^{\ast}_{n}+\sum\limits_{k,i\geq0}\frac{1}{k!}\binom{k}{i}(-1)^{i}\tilde{Y}^{\ast}_{n+i}\tilde{Y}_{m+k-i}
=\delta_{m+n+1,0} \label{eq:3.12}
 \end{eqnarray}
 for $m,n\in\mathbb{Z}$.  As the expressions on the right-hand sides are infinite sums, rigorously speaking
 $\tilde{\mathcal{A}}$ should be defined as a topological algebra.
 Also, notice that in contrast to $\mathcal{A}$,
 $\tilde{\mathcal{A}}$ is {\em not} a $\Z$-graded algebra with
$\deg \tilde{Y}_{n}=n$ and $\deg \tilde{Y}^{*}_{n}=n$ for $n\in \Z$.}
\end{rmk}


\begin{definition}
{\em A nonzero vector $v$ of an $\tilde{\mathcal{A}}$-module  is called
 a {\em vacuum vector} if $\tilde{Y}_{n}v=0=\tilde{Y}^{\ast}_{n}v$ for $n\geq 0$, and
a {\em vacuum $\tilde{\mathcal{A}}$-module} is an $\tilde{\mathcal{A}}$-module  $W$
together with a vacuum vector $v$ which generates $W$ as an $\tilde{\mathcal{A}}$-module, i.e., $W=\tilde{\mathcal{A}}v$. }
\end{definition}

Next, we relate $\tilde{A}$ to $\mathcal{A}$. Let $W$ be a restricted $\mathcal{A}$-module and set $U_W=\{Y(x),Y^{\ast}(x)\}$.
From (\ref{eq:3.4})-(\ref{eq:3.6}),  we have that $U_{W}$ is an $S_{trig}$-local subset
 of ${\mathcal{E}}(W)$, noticing that $(1-z/w)\delta(z/w)=0$. Then by Theorem \ref{thm2.1},  $U_W$ generates
a weak quantum vertex algebra $\langle U_W\rangle_{e}$ in ${\mathcal{E}}(W).$

As our first result of this section we have:

\begin{prop}\label{vacuum module}
 Let $W$ be a restricted $\mathcal{A}$-module and let $\langle U_W\rangle_{e}$ be the weak quantum vertex algebra generated by the $S_{trig}$-local subset $U_W=\{Y(x),Y^{\ast}(x)\}$ of ${\mathcal{E}}(W).$
Then $\langle U_W\rangle_{e}$ is an $\tilde{\mathcal{A}}$-module with $\tilde{Y}(z)$ and $\tilde{Y}^{\ast}(z)$ acting as
 $Y_{\mathcal{E}}^{e}(Y(x),z)$ and $Y_{\mathcal{E}}^{e}(Y^{\ast}(x),z)$, respectively.
 Moreover, $(\langle U_W\rangle_{e},1_W)$ is a vacuum  $\tilde{\mathcal{A}}$-module.
\end{prop}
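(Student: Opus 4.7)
The plan is to apply Theorem~\ref{thm2.1} to the set $U_W=\{Y(x),Y^{\ast}(x)\}\subset \mathcal{E}(W)$, and then convert the defining relations of $\mathcal{A}$ on $W$ into the $\tilde{\mathcal{A}}$-relations (\ref{eq:3.7})--(\ref{eq:3.9}) for the operators $Y^{e}_{\mathcal{E}}(Y(x),z)$ and $Y^{e}_{\mathcal{E}}(Y^{\ast}(x),z)$ on $\langle U_W\rangle_{e}$, with $\mathbf{1}_W$ serving as the vacuum vector.

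First I would verify $S_{trig}$-locality. Multiplying each of (\ref{eq:3.4})--(\ref{eq:3.6}) by $(x_1-x_2)$ annihilates the $\delta(x_1/x_2)$ on the right of (\ref{eq:3.6}) (since $(x_1-x_2)\delta(x_1/x_2)=0$) and yields
\begin{align*}
(x_1-x_2)Y(x_1)Y(x_2)&=-(x_1-x_2)(x_2/x_1)Y(x_2)Y(x_1),\\
(x_1-x_2)Y^{\ast}(x_1)Y^{\ast}(x_2)&=-(x_1-x_2)(x_2/x_1)Y^{\ast}(x_2)Y^{\ast}(x_1),\\
(x_1-x_2)Y(x_1)Y^{\ast}(x_2)&=-(x_1-x_2)(x_1/x_2)Y^{\ast}(x_2)Y(x_1),
\end{align*}
exhibiting $U_W$ as $S_{trig}$-local with $k=1$. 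Theorem~\ref{thm2.1} then produces the weak quantum vertex algebra $(\langle U_W\rangle_{e},Y^{e}_{\mathcal{E}},\mathbf{1}_W)$, with $W$ a faithful $\phi$-coordinated $\langle U_W\rangle_{e}$-module via $Y_W(a(x),z)=a(z)$.

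Next I would translate these locality relations on $W$ into relations among the generators $Y(x),Y^{\ast}(x)$ inside $\langle U_W\rangle_{e}$. Setting $\tilde{Y}(z):=Y^{e}_{\mathcal{E}}(Y(x),z)$ and $\tilde{Y}^{\ast}(z):=Y^{e}_{\mathcal{E}}(Y^{\ast}(x),z)$, and applying the proposition preceding Theorem~\ref{thm2.1}---which converts a rational function $q(x_1/x_2)$ appearing in a module-side locality relation into $q(e^{x_1-x_2})$ in the ambient quantum vertex algebra via the defining formula $Y^{e}_{\mathcal{E}}(a(x),z)b(x)=(xe^z-x)^{-k}((x_1-x)^k a(x_1)b(x))|_{x_1=xe^z}$---I expect to obtain
\begin{align*}
\tilde{Y}(z)\tilde{Y}(w)+e^{w-z}\tilde{Y}(w)\tilde{Y}(z)&=0,\\
\tilde{Y}^{\ast}(z)\tilde{Y}^{\ast}(w)+e^{w-z}\tilde{Y}^{\ast}(w)\tilde{Y}^{\ast}(z)&=0,\\
\tilde{Y}(z)\tilde{Y}^{\ast}(w)+e^{z-w}\tilde{Y}^{\ast}(w)\tilde{Y}(z)&=w^{-1}\delta(z/w),
\end{align*}
where the right-hand side in the third relation arises because the $\delta(x_1/x_2)$ in (\ref{eq:3.6}) forces $Y(x)^{e}_{0}Y^{\ast}(x)=\mathbf{1}_W$ with $Y(x)^{e}_{n}Y^{\ast}(x)=0$ for $n\ge 1$, while the first two contribute no delta term because the corresponding module-side relations have vanishing right-hand side. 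These are precisely (\ref{eq:3.7})--(\ref{eq:3.9}), so $\langle U_W\rangle_{e}$ is an $\tilde{\mathcal{A}}$-module.

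Finally, the vacuum property is essentially immediate: the nonlocal vertex algebra axiom $Y^{e}_{\mathcal{E}}(a,z)\mathbf{1}_W\in\langle U_W\rangle_{e}[[z]]$ applied to $a=Y(x),Y^{\ast}(x)$ gives $\tilde{Y}_n\mathbf{1}_W=0=\tilde{Y}^{\ast}_n\mathbf{1}_W$ for $n\ge 0$, where $\tilde{Y}(z)=\sum_n\tilde{Y}_n z^{-n-1}$. The creation property further gives $\tilde{Y}_{-1}\mathbf{1}_W=Y(x)$ and $\tilde{Y}^{\ast}_{-1}\mathbf{1}_W=Y^{\ast}(x)$, and since $\langle U_W\rangle_{e}$ is by construction generated by $U_W\cup\{\mathbf{1}_W\}$ under the $Y^{e}_{\mathcal{E}}$-products, it is generated by $\mathbf{1}_W$ as an $\tilde{\mathcal{A}}$-module, making $(\langle U_W\rangle_{e},\mathbf{1}_W)$ a vacuum $\tilde{\mathcal{A}}$-module. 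The main obstacle I anticipate is the bookkeeping in the third paragraph: executing the substitution $x_1=xe^z$ in the formula for $Y^{e}_{\mathcal{E}}$, tracking the production of $w^{-1}\delta(z/w)$ from the mixed relation, and confirming that no spurious delta corrections appear in the two homogeneous relations.
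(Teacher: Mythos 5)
Your strategy coincides with the paper's: establish $S_{trig}$-locality of $U_W$, invoke Theorem~\ref{thm2.1} to get $(\langle U_W\rangle_e, Y^e_{\mathcal{E}}, 1_W)$, push the relations on $W$ up to $\mathcal{S}$-locality relations among $Y(x)$, $Y^*(x)$ inside $\langle U_W\rangle_e$, and then read off the $\tilde{\mathcal{A}}$-relations together with the vacuum property. One substantive remark: the proposition you cite as ``the proposition preceding Theorem~\ref{thm2.1}'' runs in the \emph{opposite} direction to what you need --- it takes an $\mathcal{S}$-locality relation \emph{in a weak quantum vertex algebra $V$} and produces a commutator formula \emph{on a $\phi$-coordinated $V$-module $W$}, whereas here you must go from relations on $W$ to relations in $\langle U_W\rangle_e\subset\mathcal{E}(W)$. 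The paper achieves this with a different ingredient, Proposition~5.3 of [L5] (not reproduced here), which yields
\begin{align*}
Y^e_{\mathcal{E}}(Y(z),z_1)Y^e_{\mathcal{E}}(Y(z),z_2)&=-e^{z_2-z_1}Y^e_{\mathcal{E}}(Y(z),z_2)Y^e_{\mathcal{E}}(Y(z),z_1),\\
(z_1-z_2)Y^e_{\mathcal{E}}(Y(z),z_1)Y^e_{\mathcal{E}}(Y^*(z),z_2)&=-(z_1-z_2)e^{z_1-z_2}Y^e_{\mathcal{E}}(Y^*(z),z_2)Y^e_{\mathcal{E}}(Y(z),z_1),
\end{align*}
and similarly for $Y^*,Y^*$. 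Note the extra factor $(z_1-z_2)$ in the mixed relation: precisely because (\ref{eq:3.6}) has a nonzero right-hand side, the conversion gives only $\mathcal{S}$-locality after multiplication by $(z_1-z_2)$, not a clean commutation. The ``bookkeeping'' you flag is exactly where the paper does real work: it first computes $Y(x)^e_n Y^*(x)=\delta_{n,0}1_W$ for $n\ge 0$ (via the residue formula of Lemma~6.7 of [L5] together with (\ref{eq:3.6})), then writes out the $\mathcal{S}$-Jacobi identity that encodes the $\mathcal{S}$-locality above, and finally applies $\mathrm{Res}_{z_0}$ to isolate the $z_2^{-1}\delta(z_1/z_2)$ term in the mixed relation. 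Your homogeneous relations do drop out directly (no delta correction, since the module-side relations (\ref{eq:3.4}) and (\ref{eq:3.5}) already hold with $k=0$), and your vacuum argument is the paper's. So the proposal is sound and essentially the paper's route; just replace the mis-aimed reference to Proposition~2.7 with the appropriate $W$-to-$\mathcal{E}(W)$ transfer result from [L5], and carry out the residue step for the delta term rather than only asserting it.
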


 \begin{proof} From  \cite{Li-phi} (Proposition 5.3), with relations (\ref{eq:3.4})-(\ref{eq:3.6}) we have
 \begin{eqnarray*}
&&Y_{\mathcal{E}}^{e}(Y(z),z_{1})Y_{\mathcal{E}}^{e}(Y(z),z_{2})=-e^{z_2-z_1}Y_{\mathcal{E}}^{e}(Y(z),z_{2})Y_{\mathcal{E}}^{e}(Y(z),z_{1}), \\
&&Y_{\mathcal{E}}^{e}(Y^{\ast}(z),z_{1})Y_{\mathcal{E}}^{e}(Y^{\ast}(z),z_{2})=-e^{z_2-z_1}Y_{\mathcal{E}}^{e}(Y(z),z_{2})Y_{\mathcal{E}}^{e}(Y(z),z_{1}),\\
&&(z_1-z_2)Y_{\mathcal{E}}^{e}(Y(z),z_{1})Y_{\mathcal{E}}^{e}(Y^{\ast}(z),z_{2})=
-(z_1-z_2)e^{z_1-z_2}Y_{\mathcal{E}}^{e}(Y^{\ast}(z),z_{2})Y_{\mathcal{E}}^{e}(Y(z),z_{1}).
\end{eqnarray*}
Furthermore, with the last identity ($\mathcal{S}$-locality) we have
\begin{eqnarray}
 && z_{0}^{-1}\delta\left(\frac{z_{1}-z_{2}}{z_{0}}\right)Y_{\mathcal{E}}^{e}(Y(z),z_{1})Y_{\mathcal{E}}^{e}(Y^{\ast}(z),z_{2})\nonumber\\
   &&\  \   \  -z_{0}^{-1}\delta\left(\frac{z_{2}-z_{1}}{-z_{0}}\right)(-e^{z_1-z_2})
   Y_{\mathcal{E}}^{e}(Y^{\ast}(z),z_{2})Y_{\mathcal{E}}^{e}(Y(z),z_{1})
                                                            \nonumber\\
&=&z_{2}^{-1}\delta\left(\frac{z_{1}-z_{0}}{z_{2}}\right)
Y_{\mathcal{E}}^{e}(Y_{\mathcal{E}}^{e}(Y(z),z_0)Y^{\ast}(z),z_{2}).\label{eq:3.13}
\end{eqnarray}
Using Lemma 6.7 of \cite{Li-phi}, together with relations (\ref{eq:3.4})-(\ref{eq:3.6}), we obtain
$Y(z)^{e}_{n}Y^{\ast}(z)=0$ for $n\geq 1$ and
\begin{eqnarray*}
Y(z)^{e}_{0}Y^{\ast}(z)
&=&\mbox{Res}_{z_{1}}\left(z^{-1}Y(z_{1})Y^{\ast}(z)+z^{-1}\frac{z_1}{z}Y^{\ast}(z)Y(z_1)\right) \nonumber\\
 &=&\mbox{Res}_{z_{1}}z^{-1}\delta\left(\frac{z_{1}}{z}\right)\nonumber\\
          &=&1\ (=1_{W}).
\end{eqnarray*}
Then applying  $\mbox{Res}_{z_{0}}$ to (\ref{eq:3.13}), we obtain
 \begin{eqnarray*}
 &&Y_{\mathcal{E}}^{e}(Y(z),z_{1})Y_{\mathcal{E}}^{e}(Y^{\ast}(z),z_{2})
+e^{z_1-z_2}Y_{\mathcal{E}}^{e}(Y^{\ast}(z),z_{2})Y_{\mathcal{E}}^{e}(Y(z),z_{1})\nonumber\\
       &=& z_{2}^{-1}\delta\left(\frac{z_{1}}{z_{2}}\right)Y_{\mathcal{E}}^{e}(Y(z)_{0}^{e}Y^{\ast}(z),z_{2})\nonumber\\
       &=& z_{2}^{-1}\delta\left(\frac{z_{1}}{z_{2}}\right).
\end{eqnarray*}
Thus, $\langle U_W\rangle_{e}$ is an $\tilde{\mathcal{A}}$-module with $\tilde{Y}(z)$ and $\tilde{Y}^{\ast}(z)$ acting as
                   $Y_{\mathcal{E}}^{e}(Y(x),z)$ and $Y_{\mathcal{E}}^{e}(Y^{\ast}(x),z)$, respectively.
                   Since the weak quantum vertex algebra $\langle U_W\rangle_{e}$ is generated by $Y(x)$ and $Y^{\ast}(x)$,
                   we see that $\langle U_W\rangle_{e}$ as an $\tilde{\mathcal{A}}$-module is generated from $1_W$.
                   With $1_{W}$ being the vacuum vector of
                   the weak quantum vertex algebra $\langle U_W\rangle_{e}$,
                   we also have  $Y(x)_{n}^{e}1_W=0=\tilde{Y}^{\ast}(x)_{n}^{e}1_W$
                   for $n\geq 0$.
 Therefore, $(\langle U_W\rangle_{e},1_W)$ is a vacuum $\tilde{\mathcal{A}}$-module.
\end{proof}

\begin{rmk}\label{rmk:3.3}
{\em Recall from  Remark \ref{rmk:3.1} the Clifford algebra $\mathcal{C}$.
Let $J_{+}$ be the left ideal generated by $a_{n}$ and $b_{n}$ for $n\ge 0$. Set
$V_{\mathcal{C}}=\mathcal{C}/J_{+}$, a left $\mathcal{C}$-module. Furthermore,  set
$${\bf 1}=1+J_{+}\in V_{\mathcal{C}}\  \mbox{  and }\ a=a_{-1}\textbf{1},\  b=b_{-1}\textbf{1}\in V_{\mathcal{C}}.$$
Then (cf. \cite{FFR})  there exists a vertex superalgebra structure on $V_{\mathcal{C}}$, which is uniquely determined by the condition that
${\bf 1}$ is the vacuum vector and
$$Y(a,x)=a(x)=\sum_{n\in \Z}a_{n}x^{-n-1}\   \mbox{ and }\  Y(b,x)=b(x)=\sum_{n\in \Z}b_{n}x^{-n-1}.$$
In fact, $V_{\mathcal{C}}$ is a simple vertex operator super-algebra.}
\end{rmk}

Let $V$ be a general vertex super-algebra.
Recall from  \cite{Li8} that a {\em pseudo-endomorphism} of $V$ is a linear map $\Delta(x): V\rightarrow V\otimes\mathbb{C}((x))$
such that
\begin{eqnarray}\label{pseudo-end}
\Delta(x){\bf 1}={\bf 1} \  \mbox{ and }\  \Delta(x)Y(u,z)v=Y(\Delta(x-z)u,z)\Delta(x)v  \   \   \mbox{ for }u,v\in V.
\end{eqnarray}

View $\C((x))$ as a vertex algebra with $1$ as the vacuum vector and with
$$Y(f(x),z)=e^{-z\frac{d}{dx}}f(x)=f(x-z)\   \   \  \mbox{ for }f(x)\in \C((x)).$$
We identify $\C$-linear maps from $V$ to $V\otimes \C((x))$ with  $\C((x))$-linear endomorphisms of $V\otimes \C((x))$.
It was proved in \cite{Li8} that a  pseudo-endomorphism of $V$, considered as a
$\C((x))$-linear endomorphism of $V\otimes \C((x))$,
exactly amounts to an endomorphism of the tensor product vertex algebra $V\otimes \C((x))$ over $\C$.
Then one can define the notion of pseudo-automorphism of $V$ in the obvious way.

For the vertex super-algebra $V_{\mathcal{C}}$ we have:

\begin{lem}\label{lvsalgebra}
For any nonzero $f(x)\in \C((x))$, there exists a pseudo-automorphism $\Delta_{f}(x)$  of $V_{\mathcal{C}}$,
which is uniquely determined by
$$\Delta_{f}(x)a=a\otimes f(x),\  \   \   \   \Delta_{f}(x)b=b\otimes f(x)^{-1}.$$
Furthermore, we have
\begin{eqnarray}\label{comm-pseudo}
\Delta_{f}(x)\Delta_{g}(z)=\Delta_{fg}(x)=\Delta_{g}(z)\Delta_{f}(x) \   \   \   \mbox{ for }f,g\in \C((x))^{\times}.
\end{eqnarray}
\end{lem}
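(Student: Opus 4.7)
The plan is to invoke the characterization from \cite{Li8} recalled just above the statement: a pseudo-endomorphism of $V_{\mathcal{C}}$ is the same data as a $\mathbb{C}((x))$-linear endomorphism of the tensor-product vertex superalgebra $V_{\mathcal{C}}\otimes\mathbb{C}((x))$ over $\mathbb{C}$. Remark~\ref{rmk:3.3} exhibits $V_{\mathcal{C}}$ as the vertex superalgebra generated by $a$ and $b$ subject only to the Clifford OPEs
\[
a(z)a(w)+a(w)a(z)=0,\quad b(z)b(w)+b(w)b(z)=0,\quad a(z)b(w)+b(w)a(z)=z^{-1}\delta(w/z),
\]
so uniqueness of $\Delta_f(x)$ is immediate from prescribing its values on $a$ and $b$. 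For existence I take $A:=a\otimes f(x)$ and $B:=b\otimes f(x)^{-1}$ in $V_{\mathcal{C}}\otimes\mathbb{C}((x))$ and verify they satisfy the same three relations; this yields a vertex-superalgebra homomorphism $V_{\mathcal{C}}\to V_{\mathcal{C}}\otimes\mathbb{C}((x))$ sending $a\mapsto A$, $b\mapsto B$, which is the desired $\Delta_f(x)$.

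In the tensor product, the formula $Y(h(x),z)=h(x-z)$ on $\mathbb{C}((x))$ gives $Y(A,z)=a(z)\otimes f(x-z)$ and $Y(B,z)=b(z)\otimes f(x-z)^{-1}$, where the second tensor factors act as multiplication in $\mathbb{C}((x))$. The two fermionic relations for $A,B$ are immediate, since $f(x-z)f(x-w)$ and $f(x-z)^{-1}f(x-w)^{-1}$ are symmetric under $z\leftrightarrow w$ while the first tensor factors anticommute. For the cross relation one computes
\[
Y(A,z)Y(B,w)+Y(B,w)Y(A,z)=\bigl(a(z)b(w)+b(w)a(z)\bigr)\otimes f(x-z)f(x-w)^{-1},
\]
and applies the standard delta-function collapse $\delta(w/z)g(z,w)=\delta(w/z)g(z,z)$ to $g(z,w)=f(x-z)f(x-w)^{-1}\in\mathbb{C}((x))[[z,w]]$, whose diagonal value is $g(z,z)=1$; this produces the required $z^{-1}\delta(w/z)$ on the right.

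For the composition, $\mathbb{C}((x))$-linearity of $\Delta_f(x)$ yields
\[
\Delta_f(x)\Delta_g(x)(a)=\Delta_f(x)\bigl(a\otimes g(x)\bigr)=\bigl(a\otimes f(x)\bigr)g(x)=a\otimes f(x)g(x),
\]
and similarly on $b$, so by uniqueness $\Delta_f(x)\Delta_g(x)=\Delta_{fg}(x)$; the two variables $x,z$ in the displayed identity in the lemma are to be read as merely labelling freely-commuting copies of the operation (one may identify them), and commutativity $fg=gf$ in the field $\mathbb{C}((x))$ then yields the symmetric equality. In particular $\Delta_f(x)\Delta_{f^{-1}}(x)=\Delta_1(x)=\mathrm{id}$ promotes $\Delta_f(x)$ from a pseudo-endomorphism to a pseudo-automorphism. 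The one point requiring genuine care is the delta-function step: one must observe that $f(x-w)^{-1}$ lies in $\mathbb{C}((x))[[w]]$ — guaranteed because $f$, as a nonzero element of the field $\mathbb{C}((x))$, is invertible, so its expansion about $w=0$ has only nonnegative powers of $w$ — before the substitution $w=z$ is legitimate.
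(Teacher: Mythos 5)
Your argument is correct and follows the same route as the paper: uniqueness from the fact that $a,b$ generate $V_{\mathcal{C}}$, existence by checking that $a\otimes f(x)$ and $b\otimes f(x)^{-1}$ satisfy the Clifford OPEs in $V_{\mathcal{C}}\otimes\mathbb{C}((x))$ (with the delta-function collapse using $f(x-z)f(x-z)^{-1}=1$), and (\ref{comm-pseudo}) plus invertibility via $\Delta_f\Delta_{f^{-1}}=\Delta_1=\mathrm{id}$ from uniqueness on generators. Your explicit remark that $f(x-w)^{-1}\in\mathbb{C}((x))[[w]]$ (because its $w$-constant term $f(x)$ is a unit in $\mathbb{C}((x))$) is a small but genuine clarification that the paper leaves implicit.
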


\begin{proof} The uniqueness is clear as $V_{\mathcal{C}}$ as a vertex super-algebra is generated by $a$ and $b$.
For the rest,  we first prove that for any nonzero $f(x)\in \C((x))$, there exists a pseudo-endomorphism $\Delta_{f}(x)$  of $V_{\mathcal{C}}$ with the desired property and then prove that (\ref{comm-pseudo}) holds. Given any nonzero
$f(x)\in \C((x))$, we have
$$Y(a\otimes f(x),z)=Y(a,z)\otimes f(x-z), \  \  \  \   Y(b\otimes f(x)^{-1},z)=Y(b,z)\otimes f(x-z)^{-1}.$$
Then
\begin{eqnarray*}
&&Y(a\otimes f(x),z_1)Y(a\otimes f(x),z_2)+Y(a\otimes f(x),z_2)Y(a\otimes f(x),z_1)=0,\\
&&Y(b\otimes f(x)^{-1},z_1)Y(b\otimes f(x)^{-1},z_2)+Y(b\otimes f(x)^{-1},z_2)Y(b\otimes f(x)^{-1},z_1)=0,\\
&&Y(a\otimes f(x),z_1)Y(b\otimes f(x)^{-1},z_2)+Y(b\otimes f(x)^{-1},z_2)Y(a\otimes f(x),z_1)\\
&&\ \ =\left(Y(a,z_1)Y(b,z_2)+Y(b,z_2)Y(a,z_1)\right)\otimes f(x-z_1)f(x-z_2)^{-1}\\
&&\ \  =1\otimes z_1^{-1}\delta\left(\frac{z_2}{z_1}\right) f(x-z_1)f(x-z_2)^{-1}\\
&&\ \ =(1\otimes 1)z_1^{-1}\delta\left(\frac{z_2}{z_1}\right).
\end{eqnarray*}
It follows from the construction of $V_{\mathcal{C}}$ that there exists a homomorphism $\Phi_{f}$
of vertex super-algebras from $V_{\mathcal{C}}$ to $V_{\mathcal{C}}\otimes \C((x))$ such that
$\Phi_{f}(a)=a\otimes f(x)$ and $ \Phi_{f}(b)=b\otimes f(x)^{-1}$. Furthermore, $\Phi_{f}$ gives rise to an
endomorphism of vertex super-algebra $V_{\mathcal{C}}\otimes \C((x))$. This proves the existence.
As $a$ and $b$ generate $V_{\mathcal{C}}$ as a vertex super-algebra, (\ref{comm-pseudo}) follows immediately.
It is clear that $\Delta_{f}(x)=1$ for $f(x)=1$. Consequently, $\Delta_{f}(x)$ is a pseudo-automorphism for every nonzero $f(x)$.
\end{proof}

In view of Lemma \ref{lvsalgebra},  there exists a pseudo-automorphism $\Delta(x)$  of $V_{\mathcal{C}}$ such that
$$  \Delta(x)a=a\otimes e^{-\frac{x}{2}},\  \   \   \
\Delta(x)b=b\otimes e^{\frac{x}{2}}$$
and we have
\begin{eqnarray}\label{edelta-ab}
\Delta(x)Y(a,z)=e^{-\frac{1}{2}(x-z)}Y(a,z)\Delta(x),\   \   \   \
\Delta(x)Y(b,z)=e^{\frac{1}{2}(x-z)}Y(b,z)\Delta(x).
\end{eqnarray}
Using this we obtain the following realization of $\tilde{A}$ on $V_{\mathcal{C}}$:

\begin{prop}\label{realization-A}
There exists an $\tilde{A}$-module structure on $V_{\mathcal{C}}$ such that
\begin{eqnarray}
\tilde{Y}(x)=Y(a,x)\Delta(x),\   \    \    \  \tilde{Y}^{*}(x)=Y(b,x)\Delta(x)^{-1}.
\end{eqnarray}
Furthermore, ${\bf 1}$ is a vacuum vector of $V_{\mathcal{C}}$ viewed as an $\tilde{A}$-module.
\end{prop}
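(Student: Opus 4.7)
The plan is to verify the three defining relations (\ref{eq:3.7})--(\ref{eq:3.9}) of $\tilde{\mathcal{A}}$ on $V_{\mathcal{C}}$ by moving all $\Delta^{\pm 1}$-factors past the fermions $Y(a,\cdot), Y(b,\cdot)$ using (\ref{edelta-ab}), and then invoking the Clifford anticommutation relations in the vertex super-algebra $V_{\mathcal{C}}$. First I would record the consequences of (\ref{edelta-ab}) obtained by conjugating by $\Delta(x)^{-1}$:
\begin{eqnarray*}
\Delta(x)^{-1}Y(a,z)&=&e^{\frac{1}{2}(x-z)}Y(a,z)\Delta(x)^{-1},\\
\Delta(x)^{-1}Y(b,z)&=&e^{-\frac{1}{2}(x-z)}Y(b,z)\Delta(x)^{-1},
\end{eqnarray*}
together with the commutativity $\Delta(z)\Delta(w)=\Delta(w)\Delta(z)$, which follows from (\ref{comm-pseudo}).

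For (\ref{eq:3.7}), these identities give
\begin{eqnarray*}
\tilde{Y}(z)\tilde{Y}(w)&=&e^{-\frac{1}{2}(z-w)}Y(a,z)Y(a,w)\Delta(z)\Delta(w),\\
e^{w-z}\tilde{Y}(w)\tilde{Y}(z)&=&e^{\frac{1}{2}(w-z)}Y(a,w)Y(a,z)\Delta(z)\Delta(w),
\end{eqnarray*}
whose scalar prefactors coincide. Adding and using the fermion anticommutation $Y(a,z)Y(a,w)+Y(a,w)Y(a,z)=0$ in $V_{\mathcal{C}}$ yields (\ref{eq:3.7}); relation (\ref{eq:3.8}) is identical with $b$ replacing $a$.

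For (\ref{eq:3.9}) the same reorganization gives
\begin{eqnarray*}
\tilde{Y}(z)\tilde{Y}^{*}(w)+e^{z-w}\tilde{Y}^{*}(w)\tilde{Y}(z)=e^{\frac{1}{2}(z-w)}\bigl[Y(a,z)Y(b,w)+Y(b,w)Y(a,z)\bigr]\Delta(z)\Delta(w)^{-1}.
\end{eqnarray*}
The Clifford relation $a_{m}b_{n}+b_{n}a_{m}=\delta_{m+n+1,0}$ translates into generating-function form as $Y(a,z)Y(b,w)+Y(b,w)Y(a,z)=w^{-1}\delta(z/w)$. The standard identity $f(z,w)\delta(z/w)=f(w,w)\delta(z/w)$ applied to $f(z,w)=e^{\frac{1}{2}(z-w)}\Delta(z)\Delta(w)^{-1}$ then collapses both the scalar and the operator factor to the identity, giving $w^{-1}\delta(z/w)$, as desired.

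Finally, the vacuum property follows from $\Delta(x){\bf 1}={\bf 1}$ (and hence $\Delta(x)^{-1}{\bf 1}={\bf 1}$), so $\tilde{Y}(x){\bf 1}=Y(a,x){\bf 1}$ and $\tilde{Y}^{*}(x){\bf 1}=Y(b,x){\bf 1}$, both of which lie in $V_{\mathcal{C}}[[x]]$ by the vacuum axiom of $V_{\mathcal{C}}$; equivalently, $\tilde{Y}_{n}{\bf 1}=0=\tilde{Y}^{*}_{n}{\bf 1}$ for $n\ge 0$. The main obstacle I anticipate is the $\delta$-function manipulation inside the verification of (\ref{eq:3.9}): one must interpret $\Delta(z)\Delta(w)^{-1}$ as a well-defined operator-valued formal series in $z$ and $w$ (relying on $\Delta(x)$ being a $\C((x))$-linear endomorphism of $V_{\mathcal{C}}\otimes\C((x))$) so that setting $z=w$ inside the support of $\delta(z/w)$ is legitimate.
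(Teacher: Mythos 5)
Your proposal is correct and follows essentially the same approach as the paper: move the $\Delta^{\pm 1}$-factors past the fermion fields via (\ref{edelta-ab}) and its conjugate form, use commutativity of the $\Delta$'s, and then apply the Clifford anticommutation relations in $V_{\mathcal{C}}$ together with the delta-function substitution $f(z,w)\delta(z/w)=f(w,w)\delta(z/w)$. The paper is terser (it only writes out the third relation explicitly and leaves the first two as "straightforward"), but the computation is the same, including the vacuum-vector check via $\Delta(x){\bf 1}={\bf 1}$.
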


\begin{proof} Set
$$A(x)=Y(a,x)\Delta(x)\   \mbox{ and }  \   B(x)=Y(b,x)\Delta(x)^{-1},$$
acting on $V_{\mathcal{C}}$. We need to prove
\begin{eqnarray*}
&&A(z)A(w)+e^{w-z}A(w)A(z)=0, \nonumber\\
&&B(z)B(w)+e^{w-z}B(w)B(z)=0, \nonumber\\
&&A(z)B(w)+e^{z-w}B(w)A(z)=w^{-1}\delta\left(\frac{z}{w}\right).
\end{eqnarray*}
This is straightforward. For example, using (\ref{edelta-ab}) we have
\begin{eqnarray*}
&&A(z)B(w)+e^{z-w}B(w)A(z)\\
&=&Y(a,z)\Delta(z)Y(b,w)\Delta(w)^{-1}+e^{z-w}Y(b,w)\Delta(w)^{-1}Y(a,z)\Delta(z)\\
&=&Y(a,z)Y(b,w)e^{\frac{1}{2}(z-w)}\Delta(z)\Delta(w)^{-1}+Y(b,w)Y(a,z)e^{\frac{1}{2}(z-w)}\Delta(w)^{-1}\Delta(z)\\
&=&z^{-1}\delta\left(\frac{w}{z}\right)e^{\frac{1}{2}(z-w)}\Delta(z)\Delta(w)^{-1}\\
&=&z^{-1}\delta\left(\frac{w}{z}\right)e^{0}\Delta(w)\Delta(w)^{-1}\\
&=&z^{-1}\delta\left(\frac{w}{z}\right),
\end{eqnarray*}
where we are using the fact that $\Delta(w)^{-1}\Delta(z)=\Delta(z)\Delta(w)^{-1}$ and the basic delta-function substitution property.
\end{proof}

Note that Proposition \ref{realization-A} gives us a (nonzero) vacuum $\tilde{A}$-module.
Next, following \cite{Li-const} we show that there exists a vacuum $\tilde{A}$-module that is universal in the obvious sense.
First, let $T$ be the free associative algebra with identity over $\mathbb{C}$ with generators
$\bar{Y}_{n}$, $\bar{Y}^{\ast}_{n}$ for $n\in \mathbb{Z}$. Denote by $T_{+}$ the subspace of $T$ linearly spanned by the vectors
 $$a_{n_{1}}^{(1)}\cdots a_{n_{r}}^{(r)}$$ for $r\geq 1$,
 $a^{(i)}\in\{\bar{Y},\bar{Y}^{\ast}\},\  n_{i}\in \mathbb{Z}$ with $n_{1}+\cdots+n_{r}\geq 0$.
 Set $J=TT_{+}$, a left ideal of $T$, and then set $V_{T}=T/TT_{+}$, a left $T$-module.
 It is clear that  for any $w\in V_{T}$ and for $a\in\{\bar{Y},\bar{Y}^{\ast}\}$, $a_{n}w=0$ for $n$ sufficiently large.
 Then define $V_{\tilde{A}}$ to be the quotient $T$-module of $V_{T}$ by the relations (\ref{eq:3.10})-(\ref{eq:3.12}).
 One sees that $V_{\tilde{A}}$ is naturally an $\tilde{A}$-module.
 Let $\textbf{1}$ denote the image  of $1$ in  $V_{\tilde{A}}$.

 Set
\begin{eqnarray}
\tilde{a}=\tilde{Y}_{-1}\textbf{1}, \    \   \  \  \tilde{b}=\tilde{Y}^{\ast}_{-1}\textbf{1}\in V_{\tilde{A}}.
\end{eqnarray}
We have:

 \begin{prop}\label{thm3.1}
The  $\tilde{\mathcal{A}}$-module $V_{\tilde{A}}$ with vector $\textbf{1}$
is a vacuum module which is universal in the obvious sense,
and there exists a weak quantum vertex algebra structure on $V_{\tilde{A}}$,
which is uniquely determined by the condition that $\textbf{1}$ is the vacuum vector and
$Y(\tilde{a},z)=\tilde{Y}(z),\ Y(\tilde{b},z)=\tilde{Y}^{\ast}(z)$.
Furthermore, for every restricted $\tilde{A}$-module $W$, there exists a $V_{\tilde{A}}$-module structure
$Y_{W}(\cdot,z)$ on $W$, which is uniquely determined by $Y_{W}(\tilde{a},z)=\tilde{Y}(z),\ Y_{W}(\tilde{b},z)=\tilde{Y}^{\ast}(z)$.
\end{prop}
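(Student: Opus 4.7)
The proof follows the template of constructing weak quantum vertex algebras from $\mathcal{S}$-local sets of formal vertex operators developed in \cite{Li-nonlocal}. The \emph{universal property} is built into the construction: given a vacuum $\tilde{\mathcal{A}}$-module $(W,v)$, the $\mathbb{C}$-linear map $T\to W$, $t\mapsto tv$ (where $T$ acts on $W$ through $\tilde{\mathcal{A}}$), annihilates $TT_{+}$---a consequence of the vacuum condition on $v$ together with the $\tilde{\mathcal{A}}$-relations holding in $W$---and also respects the relations (\ref{eq:3.10})--(\ref{eq:3.12}); it therefore factors through $V_{\tilde{\mathcal{A}}}$ to give the desired $\tilde{\mathcal{A}}$-homomorphism $V_{\tilde{\mathcal{A}}}\to W$ sending $\mathbf{1}\mapsto v$, unique because $\mathbf{1}$ generates $V_{\tilde{\mathcal{A}}}$.

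For the \emph{weak quantum vertex algebra structure}, note that $V_{\tilde{\mathcal{A}}}$ is itself a restricted $\tilde{\mathcal{A}}$-module by inspection of the construction, so $\tilde{U}:=\{\tilde{Y}(x),\tilde{Y}^{*}(x)\}\subset\mathcal{E}(V_{\tilde{\mathcal{A}}})$. The defining relations (\ref{eq:3.7})--(\ref{eq:3.9}), once multiplied by $(z-w)$ (using $(z-w)w^{-1}\delta(z/w)=0$), exhibit $\tilde{U}$ as $\mathcal{S}$-local with multiplier functions $\pm e^{\pm(w-z)}\in\mathbb{C}((w-z))$. The conceptual construction in \cite{Li-nonlocal} then produces a weak quantum vertex algebra $\langle\tilde{U}\rangle\subset\mathcal{E}(V_{\tilde{\mathcal{A}}})$ with vacuum $1_{V_{\tilde{\mathcal{A}}}}$ and $V_{\tilde{\mathcal{A}}}$ as a (faithful) module. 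Extracting $\mathrm{Res}_{z_{0}}$ from the $\mathcal{S}$-Jacobi identity for $\langle\tilde{U}\rangle$, in direct analogy with the computation appearing in the proof of Proposition \ref{vacuum module}, shows that $\langle\tilde{U}\rangle$ itself carries a vacuum $\tilde{\mathcal{A}}$-module structure via its weak quantum vertex operators; universality then supplies an $\tilde{\mathcal{A}}$-homomorphism $\pi\colon V_{\tilde{\mathcal{A}}}\to\langle\tilde{U}\rangle$ with $\mathbf{1}\mapsto 1_{V_{\tilde{\mathcal{A}}}}$, hence $\tilde{a}\mapsto\tilde{Y}(x)$ and $\tilde{b}\mapsto\tilde{Y}^{*}(x)$. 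The creation map $\rho\colon\langle\tilde{U}\rangle\to V_{\tilde{\mathcal{A}}}$, $a(x)\mapsto a(x)\mathbf{1}|_{x=0}$, is an $\tilde{\mathcal{A}}$-homomorphism going the other way with $\rho(1_{V_{\tilde{\mathcal{A}}}})=\mathbf{1}$, so $\rho\circ\pi=\mathrm{id}_{V_{\tilde{\mathcal{A}}}}$; meanwhile $\pi$ is surjective because $\langle\tilde{U}\rangle$ is generated from $1_{V_{\tilde{\mathcal{A}}}}$ by $\tilde{U}$. Thus $\pi$ is a bijection, and transporting the weak quantum vertex algebra structure along $\pi^{-1}$ yields the desired structure on $V_{\tilde{\mathcal{A}}}$ with $Y(\tilde{a},z)=\tilde{Y}(z)$ and $Y(\tilde{b},z)=\tilde{Y}^{*}(z)$; uniqueness follows since $\tilde{a},\tilde{b}$ generate $V_{\tilde{\mathcal{A}}}$. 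The \emph{main obstacle} in all of this is precisely the residue computation showing that $\langle\tilde{U}\rangle$ carries the required vacuum $\tilde{\mathcal{A}}$-module structure---once this key identification is in hand, the remaining universality/creation-map argument is formal.

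For the \emph{module assertion}, given a restricted $\tilde{\mathcal{A}}$-module $W$, the same recipe applied inside $\mathcal{E}(W)$ yields a weak quantum vertex algebra $\langle U_{W}\rangle\subset\mathcal{E}(W)$ with $W$ as a module, itself a vacuum $\tilde{\mathcal{A}}$-module at $1_{W}$. Universality provides an $\tilde{\mathcal{A}}$-homomorphism $V_{\tilde{\mathcal{A}}}\to\langle U_{W}\rangle$ sending $\tilde{a},\tilde{b}$ to $\tilde{Y}(x),\tilde{Y}^{*}(x)$, and since this map matches vacuum vectors and generating fields of the two weak quantum vertex algebras, it is automatically a homomorphism of weak quantum vertex algebras. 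Pulling the $\langle U_{W}\rangle$-action on $W$ back through it gives the required $V_{\tilde{\mathcal{A}}}$-module structure $Y_{W}(\cdot,z)$ with $Y_{W}(\tilde{a},z)=\tilde{Y}(z)$ and $Y_{W}(\tilde{b},z)=\tilde{Y}^{*}(z)$, unique because $V_{\tilde{\mathcal{A}}}$ is generated by $\tilde{a}$ and $\tilde{b}$.
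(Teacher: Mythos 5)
Your overall architecture is sound and in one place genuinely more hands-on than the paper---you reconstruct the weak quantum vertex algebra structure by building $\langle\tilde{U}\rangle\subset\mathcal{E}(V_{\tilde{\mathcal{A}}})$ via $\mathcal{S}$-locality and then showing the resulting $\pi\colon V_{\tilde{\mathcal{A}}}\to\langle\tilde{U}\rangle$ and the creation map $\rho$ are inverse bijections, whereas the paper identifies $V_{\tilde{\mathcal{A}}}$ as the $(H,S)$-module $V(H,S)$ of \cite{Li-const} and then simply quotes Proposition 4.3 of \cite{Li-const} and Proposition 4.2 of \cite{KL}. But there are two concrete gaps. First, your universality argument hinges on the assertion that the map $T\to W$, $t\mapsto tv$, kills $TT_{+}$, equivalently that $a^{(1)}_{n_1}\cdots a^{(r)}_{n_r}v=0$ whenever $n_1+\cdots+n_r\ge 0$; you call this ``a consequence of the vacuum condition together with the $\tilde{\mathcal{A}}$-relations,'' but it is not an immediate one. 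The vacuum condition only kills a monomial whose \emph{rightmost} index is $\ge 0$, and reordering via (\ref{eq:3.10})--(\ref{eq:3.12}) produces \emph{infinite} sums whose total index is shifted upward, so one needs a careful filtration/termination argument (this is exactly the content of Proposition 4.3 of \cite{Li-const}, which the paper cites and you do not reprove). Second, you never establish that $V_{\tilde{\mathcal{A}}}\ne 0$, i.e.\ that $\mathbf{1}\ne 0$, which is required for $(V_{\tilde{\mathcal{A}}},\mathbf{1})$ to be a \emph{vacuum} module at all (a vacuum vector is nonzero by definition). The paper closes this by observing that Proposition \ref{realization-A} gives a nonzero vacuum $\tilde{\mathcal{A}}$-module $V_{\mathcal{C}}$, so the universal map $V_{\tilde{\mathcal{A}}}\to V_{\mathcal{C}}$ (once universality is in hand) is onto a nonzero space; your write-up should say this explicitly, since otherwise your bijection argument and the conclusion trivialize when $V_{\tilde{\mathcal{A}}}=0$.

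Once those two points are supplied, the remainder of your argument is fine: the residue extraction giving $\langle\tilde{U}\rangle$ a vacuum $\tilde{\mathcal{A}}$-module structure parallels the computation in Proposition \ref{vacuum module} (with the $\mathcal{S}$-local analogue of \cite{Li-phi} Proposition 5.3 supplied by \cite{Li-nonlocal}), the creation map $\rho(a(x))=a(x)\mathbf{1}|_{x=0}$ is well-defined and an $\tilde{\mathcal{A}}$-homomorphism by the usual generating-field induction, and transporting structure along $\pi^{-1}$ is formal. Likewise, the module-structure assertion is correctly deduced by running the same construction in $\mathcal{E}(W)$ and observing that the universal $\tilde{\mathcal{A}}$-map $V_{\tilde{\mathcal{A}}}\to\langle U_W\rangle$ automatically intertwines the vertex operators because $\tilde{a},\tilde{b}$ generate.
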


\begin{proof}
Let $H$ be a vector space with basis $\{\tilde{a}, \tilde{b}\}$ (where $\tilde{a}$ and $\tilde{b}$ are just considered as two symbols).
Define a linear map $S(x): H\otimes H\rightarrow H\otimes H\otimes {\mathbb{C}}[[x]]$ by
$$S(x)(\tilde{a}\otimes\tilde{a})=-\tilde{a}\otimes\tilde{a}\otimes e^{x},
\ \ S(x)(\tilde{b}\otimes\tilde{b})=-\tilde{b}\otimes\tilde{b}\otimes e^{x},$$
$$S(x)(\tilde{a}\otimes\tilde{b})=-\tilde{a}\otimes\tilde{b}\otimes e^{x},
\ \ S(x)(\tilde{b}\otimes\tilde{a})=-\tilde{b}\otimes\tilde{a}\otimes e^{-x}.$$
Note that $V_{\tilde{A}}$ is exactly the $(H,S)$-module $V(H,S)$ constructed in \cite{Li-const}.
From Proposition 4.3 therein,  any vacuum $\tilde{A}$-module is a homomorphism image of $V_{\tilde{A}}$.
As Proposition \ref{realization-A} provides a (nonzero) vacuum $\tilde{A}$-module,  $V_{\tilde{A}}\ne 0$,
or equivalently, ${\bf 1}\ne 0$. Thus $(V_{\tilde{A}},{\bf 1})$ is a universal vacuum $\tilde{A}$-module.
Furthermore, from Proposition 4.2 of \cite{KL},  there exists a weak quantum vertex algebra structure on
$V_{\tilde{A}}$ with $\textbf{1}$ as the vacuum vector such that $Y(\tilde{a},z)=\tilde{Y}(z),\ Y(\tilde{b},z)=\tilde{Y}^{\ast}(z)$.
The furthermore assertion on the module structure also follows.
\end{proof}

\begin{rmk}\label{match}
{\em For the weak quantum vertex algebra $V_{\tilde{A}}$, since $Y(\tilde{a},x)=\tilde{Y}(x)$
and $Y(\tilde{b},x)=\tilde{Y}^{*}(x)$, we have
\begin{eqnarray}
\tilde{a}_{n}=\tilde{Y}_{n},\   \   \   \   \tilde{b}_{n}=\tilde{Y}^{*}_{n}\    \    \mbox{ for }n\in \Z.
\end{eqnarray}
We also have
\begin{eqnarray*}
 && Y(\tilde{a},x_1)Y(\tilde{a},x_2)+e^{x_{2}-x_{1}}Y(\tilde{a},x_2)Y(\tilde{a},x_1)=0,\\
 && Y(\tilde{b},x_1)Y(\tilde{b},x_2)+e^{x_{2}-x_{1}}Y(\tilde{b},x_2)Y(\tilde{b},x_1)=0,\\
  &&Y(\tilde{a},x_1)Y(\tilde{b},x_2)+e^{x_{1}-x_{2}}Y(\tilde{b},x_2)Y(\tilde{a},x_1)
 =x_1^{-1}\delta\left(\frac{x_2}{x_1}\right),
\end{eqnarray*}
which imply  (see \cite{Li-nonlocal})
\begin{eqnarray*}
&&\tilde{a}_{n}\tilde{a}=0=\tilde{b}_{n}\tilde{b}\   \   \   \mbox{ for }n\ge 0,\\
&&\tilde{a}_{0}\tilde{b}={\bf 1} \  \mbox{ and }\  \tilde{a}_{n}\tilde{b}=0 \   \   \   \mbox{ for }n\ge 1.
\end{eqnarray*}}
\end{rmk}

Furthermore, we have:

\begin{thm}\label{thm3.2}
$V_{\tilde{A}}$ is an irreducible quantum vertex algebra.
\end{thm}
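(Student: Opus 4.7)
The plan is to deduce the theorem from the irreducibility of $V_{\tilde{\mathcal{A}}}$ as a module over $\tilde{\mathcal{A}}$, then to extract that irreducibility from the classical fact that the Clifford Fock module $V_{\mathcal{C}}$ is irreducible, via an associated-graded/PBW argument.

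First I would invoke the remark following the definition of non-degeneracy: every countable-dimensional weak quantum vertex algebra that is irreducible as a left module over itself is non-degenerate, and every non-degenerate weak quantum vertex algebra is automatically a quantum vertex algebra. Since $V_{\tilde{\mathcal{A}}}$ is visibly of countable dimension (spanned by monomials in the $\tilde{Y}_n, \tilde{Y}^{*}_n$ applied to ${\bf 1}$) and, by Remark \ref{match}, the action of $Y(\cdot,x)$ on $V_{\tilde{\mathcal{A}}}$ coincides with the $\tilde{\mathcal{A}}$-action on its generators $\tilde{a},\tilde{b}$, irreducibility as a $V_{\tilde{\mathcal{A}}}$-module is equivalent to irreducibility as an $\tilde{\mathcal{A}}$-module. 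So everything reduces to the latter.

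To attack $\tilde{\mathcal{A}}$-irreducibility, I would introduce the increasing filtration $\{F_p V_{\tilde{\mathcal{A}}}\}_{p\ge 0}$, where $F_p$ is the span of all monomials $\tilde{Y}_{n_1}\cdots \tilde{Y}_{n_r}\tilde{Y}^{*}_{m_1}\cdots \tilde{Y}^{*}_{m_s}{\bf 1}$ with $r+s\le p$. Relations (\ref{eq:3.10})--(\ref{eq:3.12}) either preserve the length of a monomial or, via the $\delta_{m+n+1,0}$ term in (\ref{eq:3.12}), drop it by $2$, so $F_\bullet$ is preserved by the $\tilde{\mathcal{A}}$-action; at the level of the associated graded, the $k=0$ pieces of those relations collapse to precisely the Clifford relations of Remark \ref{rmk:3.1}. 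Thus $\mathrm{gr}\,V_{\tilde{\mathcal{A}}}$ carries the structure of a $\mathcal{C}$-module with the symbols of $\tilde{Y}_n,\tilde{Y}^{*}_n$ acting as $a_n,b_n$ and with the class of ${\bf 1}$ annihilated by all $a_n,b_n$ for $n\ge 0$; this yields a canonical surjection $V_{\mathcal{C}}\twoheadrightarrow \mathrm{gr}\,V_{\tilde{\mathcal{A}}}$ of $\mathcal{C}$-modules.

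In the other direction, Proposition \ref{realization-A} already realizes $V_{\mathcal{C}}$ as a nonzero vacuum $\tilde{\mathcal{A}}$-module, so by the universality clause of Proposition \ref{thm3.1} there is a surjection $V_{\tilde{\mathcal{A}}}\twoheadrightarrow V_{\mathcal{C}}$ of $\tilde{\mathcal{A}}$-modules sending ${\bf 1}\mapsto {\bf 1}$; it is filtration-preserving because the pseudo-automorphisms $\Delta(x)^{\pm 1}=1+\cdots$ act as the identity on leading terms. The composition $V_{\mathcal{C}}\twoheadrightarrow \mathrm{gr}\,V_{\tilde{\mathcal{A}}}\twoheadrightarrow \mathrm{gr}\,V_{\mathcal{C}}=V_{\mathcal{C}}$ fixes the vacuum and intertwines the $\mathcal{C}$-action on leading symbols, so it is the identity; both arrows are therefore isomorphisms, giving the PBW-type identification $\mathrm{gr}\,V_{\tilde{\mathcal{A}}}\cong V_{\mathcal{C}}$. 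Combined with the simplicity of the vertex operator superalgebra $V_{\mathcal{C}}$ recorded in Remark \ref{rmk:3.3} (equivalently, irreducibility of the Fock $\mathcal{C}$-module), this forces any nonzero $\tilde{\mathcal{A}}$-submodule $W\subseteq V_{\tilde{\mathcal{A}}}$ to have $\mathrm{gr}\,W=\mathrm{gr}\,V_{\tilde{\mathcal{A}}}$, and hence $W=V_{\tilde{\mathcal{A}}}$.

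The main obstacle I anticipate is the PBW-type identification $\mathrm{gr}\,V_{\tilde{\mathcal{A}}}\cong V_{\mathcal{C}}$, i.e.\ checking rigorously that the infinite sums appearing in (\ref{eq:3.10})--(\ref{eq:3.12}) really do respect the length filtration and that only their leading ($k=0$) pieces survive on the associated graded; this is where the cited result from \cite{KL} plays its role. Once that is in place, the irreducibility of the Clifford Fock module, together with the remark on non-degeneracy, mechanically upgrades the weak quantum vertex algebra structure on $V_{\tilde{\mathcal{A}}}$ to that of an irreducible quantum vertex algebra.
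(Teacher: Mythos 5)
Your overall strategy coincides with the paper's: filter $V_{\tilde{\mathcal{A}}}$, identify the associated graded with the Clifford Fock module $V_{\mathcal{C}}$, deduce irreducibility of $V_{\tilde{\mathcal{A}}}$ as a module over itself from simplicity of $V_{\mathcal{C}}$, and finally invoke the countable-dimension/non-degeneracy remark to upgrade to a quantum vertex algebra. The reduction to $\tilde{\mathcal{A}}$-module irreducibility and the use of the realization on $V_{\mathcal{C}}$ to guarantee nonvanishing are both consistent with what the paper does (the paper phrases the last two steps via Propositions 2.10 and 2.11 of \cite{KL} applied to the graded nonlocal vertex algebra $\mathrm{Gr}_F V_{\tilde{\mathcal{A}}}$).

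However, there is a genuine gap at exactly the point you flagged as the main obstacle: your filtration by monomial \emph{length} $r+s$ does not yield the Clifford relations on the associated graded. In relations (\ref{eq:3.10})--(\ref{eq:3.12}) the correction terms with $k\ge 1$, namely $\frac{1}{k!}\binom{k}{i}(-1)^{i}\tilde{Y}_{n+k-i}\tilde{Y}_{m+i}$ (and likewise for $\tilde{Y}^{*}$), are quadratic and hence \emph{length-preserving}; they do not drop into $F_{p-1}$ with the length filtration, so they survive on $\mathrm{gr}$ and your claimed collapse to the $k=0$ pieces is false. The paper instead uses the filtration from \cite{KL} (Lemma 2.9): $F_n$ is spanned by $u^{(1)}_{m_1}\cdots u^{(r)}_{m_r}\mathbf{1}$ ($u^{(i)}\in\{\tilde a,\tilde b\}$) with $m_1+\cdots+m_r\ge -n$, i.e.\ a filtration by total \emph{conformal degree}, which satisfies $u_m F_n\subset F_{k+n-m-1}$ for $u\in F_k$. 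Under this filtration the $k\ge1$ correction terms have index-sum $m+n+k>m+n$ and hence live strictly deeper in the filtration, so only the $k=0$ pieces survive and $\mathrm{Gr}_F V_{\tilde{\mathcal{A}}}$ becomes a genuine $\mathcal{C}$-module (in fact a cyclic module generated by a vacuum vector, hence a quotient of $V_{\mathcal{C}}$, hence $\cong V_{\mathcal{C}}$ since $V_{\mathcal{C}}$ is simple and the graded space is nonzero). Replacing your length filtration by this degree filtration, and citing Lemma 2.9 and Propositions 2.10--2.11 of \cite{KL} together with Corollary 4.2 of \cite{Li-const} (which gives $F_n=0$ for $n<0$), repairs the argument.

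A secondary, smaller issue: your map $V_{\tilde{\mathcal{A}}}\twoheadrightarrow V_{\mathcal{C}}$ from Proposition \ref{realization-A} is indeed available, but the claim that it is ``filtration-preserving because $\Delta(x)^{\pm1}=1+\cdots$ acts as the identity on leading terms'' needs the correct (degree) filtration to even make sense, since $\Delta(x)$ introduces infinite tails of Clifford modes and does not respect length. The paper avoids this delicacy entirely: it only needs the existence of a nonzero vacuum $\tilde{\mathcal{A}}$-module to conclude $\mathbf{1}\ne 0$ in $V_{\tilde{\mathcal{A}}}$, and then the simplicity of $V_{\mathcal{C}}$ pins down the associated graded uniquely without any comparison of filtrations on $V_{\mathcal{C}}$.
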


\begin{proof} Set $S=\{\tilde{a}, \tilde{b}\}\subset V_{\tilde{A}}$.
For any integer $n$, denote by $F_{n}$ the linear span of vectors
$$u^{(1)}_{m_1}\cdots u^{(r)}_{m_r}\textbf{1}$$
for $r\ge 1$ if $n<0$, for $r\ge 0$ if $n\ge 0$, and for $u^{(i)}\in S,\  m_i\in \Z$ with
$m_1+\cdots+m_{r}\ge -n.$ By Corollary 4.2 of \cite{Li-const},  $F_{n}=0$ for $n<0$.
From \cite{KL} (Lemma 2.9), $\{F_{n}\}_{n\in\mathbb{Z}}$ is an increasing filtration of $V_{\tilde{A}}$
such that $\textbf{1}\in F_{0}$ and
$$u_{m}F_n\subset F_{k+n-m-1}\   \   \mbox{ for } u\in F_k,\  k,m,n\in\mathbb{Z}.$$
 Consider the associated graded vector space
$\mbox{Gr}_{F}(V_{\tilde{A}})=\coprod_{n\in\mathbb{Z}}(F_n/F_{n-1})$.
From Proposition 2.10 of \cite{KL}, $\mbox{Gr}_{F}(V_{\tilde{A}})$ is a $\mathbb{Z}$-graded nonlocal vertex algebra.
Notice that $\tilde{a}=\tilde{a}_{-1}{\bf 1}\in F_1$, $\tilde{b}=\tilde{b}_{-1}{\bf 1}\in F_1$.
Set
$$\bar{a}=\tilde{a}+F_0,\   \   \   \  \bar{b}=\tilde{b}+F_0 \in F_1/F_0.$$
Since $\tilde{a}$ and $\tilde{b}$ generate $V_{\tilde{A}}$ as a nonlocal vertex algebra,
$\bar{a}$ and $\bar{b}$ generate $\mbox{Gr}_{F}(V_{\tilde{A}})$ as a nonlocal vertex algebra.
As $F_{n}=0$ for $n<0$,  $\mbox{Gr}_{F}(V_{\tilde{A}})$ is nonzero.
From relations (\ref{eq:3.10})-(\ref{eq:3.12}), we see that
$\mbox{Gr}_{F}(V_{\tilde{A}})$ is a module for the Clifford algebra $\mathcal{C}$ with $a_{n},\ b_{n}$ for $n\in \Z$ acting as
$\bar{a}_{n},\ \bar{b}_{n}$, respectively.
Consequently,  $\mbox{Gr}_{F}(V_{\tilde{A}})$ as a $\mathcal{C}$-module is isomorphic to $V_{\mathcal{C}}$.
It follows that $\mbox{Gr}_{F}(V_{\tilde{A}})$ as a nonlocal vertex algebra is isomorphic to $V_{\mathcal{C}}$.
Thus, $\mbox{Gr}_{F}(V_{\tilde{A}})$ is an irreducible module for
 $\mbox{Gr}_{F}(V_{\tilde{A}})$ viewed as a nonlocal vertex algebra.
 Then by Proposition 2.11 in \cite{KL}, $V_{\tilde{A}}$ is an irreducible module for $V_{\tilde{A}}$
 viewed as a nonlocal  vertex algebra. Therefore,  $V_{\tilde{A}}$ is an irreducible quantum vertex algebra.
\end{proof}

As an immediate consequence we have:

\begin{cor}
Every vacuum $\tilde{\mathcal{A}}$-module is irreducible and
vacuum $\tilde{\mathcal{A}}$-modules are unique up to isomorphism.
\end{cor}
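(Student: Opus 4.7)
The plan is to deduce this corollary directly from the universality of $V_{\tilde{\mathcal{A}}}$ established in Proposition \ref{thm3.1} together with the irreducibility established in Theorem \ref{thm3.2}. Let $(W,v)$ be any vacuum $\tilde{\mathcal{A}}$-module. First I would note that $W$ is automatically restricted: since $W=\tilde{\mathcal{A}}v$ and $\tilde{Y}_n v=0=\tilde{Y}^{\ast}_n v$ for $n\ge 0$, a straightforward induction using the relations (\ref{eq:3.10})--(\ref{eq:3.12}) to commute $\tilde{Y}_n,\tilde{Y}^{\ast}_n$ (with $n$ large) past the generators acting on $v$ shows that $\tilde{Y}_n w=0=\tilde{Y}^{\ast}_n w$ for all sufficiently large $n$ and every $w\in W$.

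Next, by the universality asserted in Proposition \ref{thm3.1}, there exists a (unique) surjective $\tilde{\mathcal{A}}$-module homomorphism $\pi\colon V_{\tilde{\mathcal{A}}}\to W$ with $\pi(\mathbf{1})=v$. The kernel $\ker\pi$ is an $\tilde{\mathcal{A}}$-submodule of $V_{\tilde{\mathcal{A}}}$, and since $V_{\tilde{\mathcal{A}}}$ is itself restricted, so is $\ker\pi$. By the second part of Proposition \ref{thm3.1}, restricted $\tilde{\mathcal{A}}$-module structures correspond to $V_{\tilde{\mathcal{A}}}$-module structures via $Y_W(\tilde{a},z)=\tilde{Y}(z)$, $Y_W(\tilde{b},z)=\tilde{Y}^{\ast}(z)$, so $\ker\pi$ is in fact a $V_{\tilde{\mathcal{A}}}$-submodule of $V_{\tilde{\mathcal{A}}}$ (viewed as the adjoint module).

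By Theorem \ref{thm3.2}, $V_{\tilde{\mathcal{A}}}$ is irreducible as a module over itself, so $\ker\pi\in\{0,V_{\tilde{\mathcal{A}}}\}$. Since $\pi(\mathbf{1})=v\ne 0$ we must have $\ker\pi=0$, whence $\pi$ is an isomorphism of $\tilde{\mathcal{A}}$-modules. This single conclusion simultaneously establishes both halves of the corollary: uniqueness, because an arbitrary vacuum $\tilde{\mathcal{A}}$-module $W$ is isomorphic to $V_{\tilde{\mathcal{A}}}$, hence any two such modules are isomorphic; and irreducibility, because any nonzero $\tilde{\mathcal{A}}$-submodule of $W\cong V_{\tilde{\mathcal{A}}}$ would pull back to a nonzero restricted $\tilde{\mathcal{A}}$-submodule of $V_{\tilde{\mathcal{A}}}$, i.e.\ a nonzero $V_{\tilde{\mathcal{A}}}$-submodule, which must be all of $V_{\tilde{\mathcal{A}}}$ by Theorem \ref{thm3.2}.

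There is no real obstacle here; the proof is essentially a direct citation of the previous two results. The only minor point requiring care is the interchange between $\tilde{\mathcal{A}}$-submodules and $V_{\tilde{\mathcal{A}}}$-submodules, which rests on observing that every $\tilde{\mathcal{A}}$-submodule of a restricted $\tilde{\mathcal{A}}$-module is itself restricted and therefore inherits the $V_{\tilde{\mathcal{A}}}$-module structure from the ambient module.
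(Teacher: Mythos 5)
Your proof is correct and is exactly what the paper intends by labeling the corollary an ``immediate consequence'' of Theorem~\ref{thm3.2}: universality of $(V_{\tilde{\mathcal{A}}},\mathbf{1})$ from Proposition~\ref{thm3.1} produces a surjection $\pi\colon V_{\tilde{\mathcal{A}}}\to W$ for any vacuum module $(W,v)$, and irreducibility of the adjoint module forces $\ker\pi=0$, which yields both assertions simultaneously. The one step worth phrasing more directly is why $\ker\pi$ is a $V_{\tilde{\mathcal{A}}}$-submodule of the adjoint module: $\ker\pi$ is invariant under $\tilde{a}_n=\tilde{Y}_n$ and $\tilde{b}_n=\tilde{Y}^{*}_n$, and since $\tilde{a},\tilde{b}$ generate $V_{\tilde{\mathcal{A}}}$ as a nonlocal vertex algebra, weak associativity forces invariance under all modes $v_n$; appealing to the module-structure correspondence of Proposition~\ref{thm3.1} is a slightly indirect route to that same standard fact.
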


As our main result we have:

\begin{thm}\label{thm3.3}
Let $W$ be a restricted $\mathcal{A}$-module.
Then there exists a $\phi$-coordinated $V_{\tilde{A}}$-module structure
$Y_{W}(\cdot,z)$ on $W$, which is uniquely determined by $Y_{W}(\tilde{a},z)=Y(z),$ $Y_{W}(\tilde{b},z)=Y^{\ast}(z)$.
On the other hand, for any $\phi$-coordinated $V_{\tilde{A}}$-module $(W,Y_W)$,
$W$ is a restricted $\mathcal{A}$-module with $Y(z)=Y_{W}(\tilde{a},z),\ Y^{\ast}(z)=Y_{W}(\tilde{b},z)$.
\end{thm}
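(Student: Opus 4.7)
The strategy is to prove the two directions separately. The backward direction is more direct, while the forward one rests on a universality argument that needs some care.

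For the forward direction, let $W$ be a restricted $\mathcal{A}$-module. By (\ref{eq:3.4})--(\ref{eq:3.6}) the set $U_W=\{Y(x),Y^{\ast}(x)\}$ is $S_{trig}$-local in $\mathcal{E}(W)$, so Theorem \ref{thm2.1} produces a weak quantum vertex algebra $V':=\langle U_W\rangle_e$ with $W$ a $\phi$-coordinated $V'$-module satisfying $Y_W(a(x),z)=a(z)$. By Proposition \ref{vacuum module}, $(V',1_W)$ is a vacuum $\tilde{\mathcal{A}}$-module, and the universality in Proposition \ref{thm3.1} then gives a unique $\tilde{\mathcal{A}}$-module map $\psi:V_{\tilde{A}}\to V'$ with $\psi(\textbf{1})=1_W$. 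A direct computation using the creation axiom (whereby $Y_{V'}(Y(x),z)1_W$ has constant term $Y(x)$) yields $\psi(\tilde{a})=Y(x)$ and $\psi(\tilde{b})=Y^{\ast}(x)$. The key step is to upgrade $\psi$ to a morphism of weak quantum vertex algebras; once this is done, pulling the $\phi$-coordinated $V'$-module structure on $W$ back along $\psi$ defines a $\phi$-coordinated $V_{\tilde{A}}$-module structure on $W$ with $Y_W(\tilde{a},z)=Y(z)$ and $Y_W(\tilde{b},z)=Y^{\ast}(z)$, and uniqueness follows because $\tilde{a},\tilde{b}$ generate $V_{\tilde{A}}$ as a weak quantum vertex algebra.

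For the backward direction, given a $\phi$-coordinated $V_{\tilde{A}}$-module $(W,Y_W)$, I would simply set $Y(z):=Y_W(\tilde{a},z)$ and $Y^{\ast}(z):=Y_W(\tilde{b},z)$; restrictedness is automatic since both lie in $\mathcal{E}(W)$. To verify the defining relations (\ref{eq:3.1})--(\ref{eq:3.3}) of $\mathcal{A}$, I would apply the proposition of Section 2 that converts $\mathcal{S}$-locality identities in a weak quantum vertex algebra into operator identities on $\phi$-coordinated modules. Starting from the three $\mathcal{S}$-locality relations among $Y(\tilde{a},x),Y(\tilde{b},x)$ in $V_{\tilde{A}}$ (with the relevant rational functions being $f(y)=-y$ for the $\tilde a\tilde a$ and $\tilde b\tilde b$ relations and $f(y)=-y^{-1}$ for the $\tilde a\tilde b$ relation, where $y=e^{x_2-x_1}$), the proposition yields identities on $W$ whose residual $\delta$-function contributions involve the products $\tilde{a}_n\tilde{a}$, $\tilde{b}_n\tilde{b}$, and $\tilde{a}_n\tilde{b}$ for $n\ge 0$. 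By Remark \ref{match} these all vanish except for $\tilde{a}_0\tilde{b}=\textbf{1}$, which supplies precisely the $\delta$-function in (\ref{eq:3.3}); extracting coefficients in the resulting equalities of generating series recovers (\ref{eq:3.1})--(\ref{eq:3.3}).

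The main obstacle I foresee is the upgrading step in the forward direction: showing that the $\tilde{\mathcal{A}}$-module map $\psi:V_{\tilde{A}}\to V'$ is in fact a morphism of weak quantum vertex algebras. The plan is to exploit that both $V_{\tilde{A}}$ and $V'$ are generated as weak quantum vertex algebras by their pairs of distinguished generators, and that by Remark \ref{match} and Proposition \ref{vacuum module} the $\tilde{\mathcal{A}}$-module action coincides on each side with the vertex-operator action of those generators (namely $\tilde{a}_n=\tilde{Y}_n$, $\tilde{b}_n=\tilde{Y}^{\ast}_n$ in $V_{\tilde{A}}$, and $Y(x)_n^e=\tilde{Y}_n$, $Y^{\ast}(x)_n^e=\tilde{Y}^{\ast}_n$ in $V'$). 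Thus $\tilde{\mathcal{A}}$-linearity plus preservation of the vacuum forces $\psi$ to intertwine $Y(\tilde{a},z)$ with $Y_{V'}(Y(x),z)$ and $Y(\tilde{b},z)$ with $Y_{V'}(Y^{\ast}(x),z)$; iterating with the $\mathcal{S}$-Jacobi identity of $V_{\tilde{A}}$ then propagates the intertwining property to arbitrary $u\in V_{\tilde{A}}$, confirming $\psi$ is a weak-quantum-vertex-algebra morphism and completing the forward direction.
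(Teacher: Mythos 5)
Your proposal matches the paper's proof in both structure and detail. The forward direction follows the same chain — $U_W$ is $S_{trig}$-local, $\langle U_W\rangle_e$ is a vacuum $\tilde{\mathcal{A}}$-module by Proposition \ref{vacuum module}, universality of $(V_{\tilde{A}},\mathbf{1})$ gives the $\tilde{\mathcal{A}}$-module map, which is upgraded to a morphism of weak quantum vertex algebras using that $\tilde a,\tilde b$ generate, and then the $\phi$-coordinated module structure is pulled back — and the backward direction uses exactly the proposition from Section 2 (the paper cites Propositions 5.6 and 5.9 of \cite{Li-phi}) together with the singular-part computations from Remark \ref{match}, with the correct rational functions $-y$ and $-y^{-1}$.
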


 \begin{proof} With $W$ a restricted $\mathcal{A}$-module, from Theorem \ref{vacuum module},
 the weak quantum vertex algebra $\langle U_W\rangle_{e}$ generated by $U_W=\{Y(x),Y^{\ast}(x)\}$ is a vacuum $\mathcal{\tilde{A}}$-module with $\tilde{Y}_{n}$ and $\tilde{Y}^{*}_{n}$ acting as $Y(x)_{n}^{e}$  and $Y^{*}(x)_{n}^{e}$, respectively.
 Since  the vacuum module ($V_{\mathcal{\tilde{A}}}$, ${\bf 1}$)
 of $\mathcal{\tilde{A}}$ is universal, there exists an $\mathcal{\tilde{A}}$-module homomorphism
 $\rho$ from $V_{\mathcal{\tilde{A}}}$ to $\langle U_W\rangle_{e}$, sending $\textbf{1}$ to $1_{W}$. We have
 $$\rho(\tilde{a})=\rho(\tilde{Y}_{-1}{\bf 1})=Y(x)_{-1}^{e}1_{W}=Y(x), \   \  \  \
\rho(\tilde{b})=\rho(\tilde{Y}^{*}_{-1}{\bf 1})=Y^{*}(x)_{-1}^{e}1_{W}=Y^{*}(x)$$
 and
 \begin{eqnarray*}
 &&\rho(Y(\tilde{a},z)v)=\rho(\tilde{Y}(z)v)=Y_{\E}^{e}(Y(x),z)\rho(v)=Y_{\E}^{e}(\rho(\tilde{a}),z)\rho(v),\\
 &&\rho(Y(\tilde{b},z)v)=\rho(\tilde{Y}^{*}(z)v)=Y_{\E}^{e}(Y^{*}(x),z)\rho(v)=Y_{\E}^{e}(\rho(\tilde{b}),z)\rho(v)
 \end{eqnarray*}
 for $v\in V_{\tilde{A}}$.
Since $V_{\mathcal{\tilde{A}}}$ as a nonlocal vertex algebra is generated by $\tilde{a}$ and $\tilde{b}$,
it follows that $\rho$ is a homomorphism of weak quantum vertex algebras.
 Recall that $W$ is a canonical $\phi$-coordinated module for the
weak quantum vertex algebra  $\langle U_W\rangle_{e}$.
 Consequently, $W$ becomes a $\phi$-coordinated $V_{\mathcal{\tilde{A}}}$-module with
  $Y_{W}(\tilde{a},z)=Y(z)$ and $Y_{W}(\tilde{b},z)=Y^{\ast}(z)$.

On the other hand, assume that $W$ is a $\phi$-coordinated $V_{\mathcal{\tilde{A}}}$-module.
  From Propositions 5.6 and  5.9 of \cite{Li-phi}, we have
\begin{eqnarray*}
 && Y_{W}(\tilde{a},z_1)Y_{W}(\tilde{a},z_2)+\frac{z_{2}}{z_{1}}Y_{W}(\tilde{a},z_2)Y_{W}(\tilde{a},z_1)=0,\\
 && Y_{W}(\tilde{b},z_1)Y_{W}(\tilde{b},z_2)+\frac{z_{2}}{z_{1}}Y_{W}(\tilde{b},z_2)Y_{W}(\tilde{b},z_1)=0,\\
  &&Y_{W}(\tilde{a},z_1)Y_{W}(\tilde{b},z_2)+\frac{z_{1}}{z_{2}}Y_{W}(\tilde{b},z_2)Y_{W}(\tilde{a},z_1)\\
 &=& \Res_{z_{0}}z_{1}^{-1}\delta\left(\frac{z_{2}e^{z_{0}}}{z_{1}}\right)z_{2}e^{z_{0}}Y_{W}(Y(\tilde{a},z_0)\tilde{b},z_2)\\
 &=& \delta\left(\frac{z_2}{z_1}\right),
\end{eqnarray*}
where we are using  the fact that $\tilde{a}_{n}\tilde{b}=0$ for $n\ge 1$ and $\tilde{a}_{0}\tilde{b}={\bf 1}$
(see Remark \ref{match}).
Thus, $W$ is an $\mathcal{A}$-module with $Y(z)=Y_{W}(\tilde{a},z),\ Y^{\ast}(z)=Y_{W}(\tilde{b},z)$.
As $W$ is a $\phi$-coordinated $V_{\mathcal{\tilde{A}}}$-module, by the definition we have
$Y_{W}(\tilde{a},z),\ Y_{W}(\tilde{b},z)\in {\mathcal{E}}(W)$.
Therefore, $W$ is a restricted $\mathcal{A}$-module.
\end{proof}

Combining Theorems \ref{thm3.3}  and \ref{classification-A} we immediately have:

\begin{cor}
$\mathcal{A}$-modules $L_{\mathcal{A}}(\mu)$ for $\mu\in \C^{\times}$ form a complete set of
equivalence class representatives of irreducible $\N$-graded $\phi$-coordinated $V_{\tilde{A}}$-modules.
\end{cor}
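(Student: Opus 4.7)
The plan is to derive the corollary directly by composing the bijective correspondence of Theorem~\ref{thm3.3} with the classification of Theorem~\ref{classification-A}. First I would verify the basic compatibility: every $\N$-graded $\mathcal{A}$-module $W=\bigoplus_{n\ge 0}W(n)$ is automatically restricted, since the grading convention $\deg Y_n=\deg Y^{*}_n=-n$ forces $Y_n w,\ Y^{*}_n w\in W(k-n)=0$ whenever $w\in W(k)$ and $n>k$. Consequently, the constructions of Theorem~\ref{thm3.3} are available on every $\N$-graded $\mathcal{A}$-module, and conversely on every $\N$-graded $\phi$-coordinated $V_{\tilde{\mathcal{A}}}$-module (viewed as an $\mathcal{A}$-module via $Y(z)=Y_W(\tilde a,z),\ Y^{*}(z)=Y_W(\tilde b,z)$, the $\N$-grading again guarantees the restricted condition).

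Next I would check that the correspondence of Theorem~\ref{thm3.3} preserves both $\N$-gradings and submodule lattices. The $\N$-grading is preserved because in both directions the underlying vector space is unchanged, and the $\mathcal{A}$-action determines the $V_{\tilde{\mathcal{A}}}$-action (and vice versa) through the identifications $Y_W(\tilde a,z)=Y(z)$, $Y_W(\tilde b,z)=Y^{*}(z)$. For submodules, a subspace $W'\subseteq W$ is stable under $\mathcal{A}$ if and only if it is stable under $Y_W(\tilde a,z)$ and $Y_W(\tilde b,z)$; since $\tilde a,\tilde b$ generate $V_{\tilde{\mathcal{A}}}$ as a weak quantum vertex algebra (Proposition~\ref{thm3.1}), and $\phi$-coordinated modules admit the iterate formula relating $Y_W(u_nv,x)$ to the coefficients of $Y_W(u,x_1)Y_W(v,x_2)$, it follows that any such $W'$ is automatically a $V_{\tilde{\mathcal{A}}}$-submodule. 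In particular, irreducibility is preserved in both directions.

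Putting these together, the map $W\mapsto W$ (with action transferred as in Theorem~\ref{thm3.3}) gives a bijection between equivalence classes of irreducible $\N$-graded $\mathcal{A}$-modules and equivalence classes of irreducible $\N$-graded $\phi$-coordinated $V_{\tilde{\mathcal{A}}}$-modules. Applying Theorem~\ref{classification-A}, the former are exhausted, up to isomorphism and without repetition, by $L_{\mathcal{A}}(\mu)$ for $\mu\in\C^{\times}$, which yields the corollary.

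The main (minor) obstacle is verifying that submodules match under the correspondence --- specifically, that stability under the two generating fields $Y(z),\ Y^{*}(z)$ of $\mathcal{A}$ is equivalent to stability under all components $Y_W(v,x)_n$ for $v\in V_{\tilde{\mathcal{A}}}$. This follows from the generating property of $\tilde a,\tilde b$ inside $V_{\tilde{\mathcal{A}}}$ together with the $\phi$-coordinated weak associativity, but it is the only place where a small argument beyond quoting the earlier theorems is required.
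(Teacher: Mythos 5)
Your proposal is correct and follows essentially the same route as the paper, which simply combines Theorem~\ref{thm3.3} with Theorem~\ref{classification-A} and states the corollary as immediate. You add the (genuinely worth stating) verifications that $\N$-graded $\mathcal{A}$-modules are automatically restricted, that the grading transports trivially since the underlying space is unchanged, and that the submodule lattices match because $\tilde a,\tilde b$ generate $V_{\tilde{\mathcal{A}}}$ and the $\phi$-coordinated iterate formula propagates stability under the two generating fields to stability under all $Y_W(v,x)$; these are exactly the small gaps the paper elides.
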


\end{document}